\theoremstyle{plain}
\newtheorem{lemma}{Lemma}[section]
\newtheorem{theorem}[lemma]{Theorem}
\theoremstyle{definition}
\newtheorem*{remark*}{Remark}
\newtheoremstyle{hypstyle}{}{}{}{}{\bfseries}{.}{ }%
{\thmname{#1}\thmnumber{H#2}\thmnote{}}
\theoremstyle{hypstyle}
\newenvironment{hyp}{\pushQED{\qed}\hypbase}{\popQED\endhypbase}
\newcommand{\R}{{\mathbb R}}
\newcommand{\N}{{\mathbb N}}
\newcommand{\Z}{{\mathbb Z}}
\newcommand{\mA}{\mathcal{A}}
\newcommand{\mC}{\mathcal{C}}
\newcommand{\mD}{\mathcal{D}}
\newcommand{\mE}{\mathcal{E}}
\newcommand{\mL}{\mathcal{L}}
\newcommand{\mP}{\mathcal{P}}
\newcommand{\mS}{\mathcal{S}}
\newcommand{\mU}{\mathcal{U}}
\newcommand{\mW}{\mathcal{W}}
\renewcommand{\a}{\alpha}
\renewcommand{\b}{\beta}
\newcommand{\g}{\gamma}
\newcommand{\D}{\Delta}
\newcommand{\ph}{\varphi}
\newcommand{\lm}{\lambda}
\newcommand{\Om}{\Omega}
\newcommand{\p}{\pi}
\newcommand{\s}{\sigma}
\renewcommand{\t}{\tau}
\renewcommand{\th}{\vartheta}
\newcommand{\gr}{\nabla}
\newcommand{\br}{\boldsymbol{r}}
\newcommand{\bR}{\mathbf R}
\newcommand{\oxi}{\overline \xi}
\newcommand{\omm}{\overline m}
\newcommand{\oy}{\overline y}
\newcommand{\olm}{\overline \lm}
\newcommand{\intp}{\int_{0}^{2\p}}
\newcommand{\Ker}{\mathrm{Ker}\,}
\newcommand{\Range}{\mathrm{Range}\,}
\newcommand{\Span}{\mathrm{span}\,}
\numberwithin{equation}{section}
\begin{document}

\vspace{30mm}
\begin{center}
\begin{large}
\textbf{Bifurcation and Secondary Bifurcation of\\
Heavy Periodic Hydroelastic Travelling Waves}
\end{large}

\vspace{5mm}

\textsc{Pietro Baldi}\footnote{
\emph{E-mail}: \texttt{pietro.baldi@unina.it}}

\emph{Dipartimento di Matematica e Applicazioni ``R. Caccioppoli'',\\
Universit\`a di Napoli ``Federico II'',\\
Via Cintia, 80126 Napoli, Italy}

\textsc{and}

\textsc{John F. Toland}\footnote{
\emph{E-mail}: \texttt{jft@maths.bath.ac.uk}}

\emph{Department of Mathematical Sciences,\\
University of Bath,\\
Bath BA2 7AY, UK}
\end{center}

\vspace{1mm}

\begin{abstract}
The existence question for two-dimensional symmetric steady waves travelling on the surface of a deep ocean beneath a heavy elastic membrane is analyzed as a problem in bifurcation theory.
The behaviour of the two-dimensional cross-section of the membrane is modelled as a thin (unshearable), heavy, hyperelastic Cosserat rod, and the fluid beneath is supposed to be in steady two-dimensional irrotational motion under gravity.
When the wavelength has been normalized to be $2\pi$, and assuming that gravity and the density of the undeformed membrane are prescribed, there are two free parameters in the problem: the speed of the wave and drift velocity of the membrane.

It is observed that the problem, when linearized about uniform horizontal flow, has at most two independent solutions for any values of the parameters.
 When the linearized problem has only one normalized solution, it is shown that the full nonlinear problem has a sheet of solutions comprised of a family of curves bifurcating from simple eigenvalues. Here one of the problem's parameters is used to index a family of bifurcation problems in which the other is the bifurcation parameter.

When the linearized problem has two solutions,
with wave numbers $k$ and $l$
such that $\max\{k,l\} / \min\{k,l\} \notin \Z$,
it is shown that there are three two-dimensional sheets of bifurcating solutions. One consists of ``special'' solutions with minimal period $2\pi/k$; another consists of ``special'' solutions with minimal period $2\pi/l$; and the third, apart from those on the curves where it intersects the ``special'' sheets, consists of ``general'' solutions with minimal period $2\pi$.

 The two sheets of ``special'' solutions are rather similar to those that occur when the linearized problem has only one solution.
 However, points where the first sheet or the second sheet intersects the third sheet are period-multiplying (or symmetry-breaking) secondary bifurcation points on primary branches of ``special'' solutions. This phenomenon is analogous to that of Wilton ripples, which arises in the classical water-wave problem when the surface tension has special values. In the case of Wilton ripples, the coefficient of surface tension and the wave speed are the problem's two parameters. In the present context, there are two speed parameters, meaning that the membrane elasticity does not need to be highly specified for this symmetry-breaking phenomenon to occur.
%
\end{abstract}

\begin{small}
\emph{Keywords:} hydrodynamic waves, hydroelastic waves, nonlinear elasticity, free boundary problems, travelling waves, bifurcation theory, secondary bifurcations, Wilton ripples, Lyapunov-Schmidt reduction,
symmetry-breaking.

\emph{2000 Mathematics Subject Classification:}
35R35, 
74B20, 
74F10, 
76B07, 
37G40. 

\end{small}

\section{Introduction}
The existence question for symmetric, $2\pi$-periodic steady waves
travelling with speed $c_0$ on the surface of a heavy, inviscid fluid
which is at rest at infinite depth beneath a heavy, thin (unshearable) elastic membrane was considered in \cite{Toland-heavy} as a global problem in the calculus of variations.
Here we use local methods to study the bifurcation of such waves.
We will show that there are two free parameters, $c_0$ (the wave speed) and $d$ (the membrane drift velocity) and that, when the problem is linearized about a uniform horizontal stream with the membrane unstretched, there are no more than two linearly independent solutions. (See equation \eqref{pinotto}, in which $\lm_1 = \rho(c_0-d)^2$, $\lm_2= c_0^2$, $\rho$ is the density of the membrane and $\mC$ is the Hilbert transform of a $2\pi$-periodic function.)
When there is no non-zero solution, nonlinear waves do not bifurcate from uniform horizontal streams;
when there is only one solution, a sheet of solutions representing a parameterized family of bifurcations from simple eigenvalues occurs;
when there are two independent solutions, there bifurcate three sheets of small-amplitude periodic waves.
The latter corresponds to the presence of secondary bifurcations from curves of ``special'' solutions, the hydroelastic analogue of what are known as Wilton ripples \cite{wilton}, as described in the Abstract and in Section \ref{picture}. To quote from \cite{cd}, ``Waves characterized by two dominant modes are often called Wilton's
ripples in the literature in reference to Wilton's  paper (1915). It
turns out that the phenomenon described as Wilton's ripples was
accounted for at least twice prior to Wilton's  paper : in an
unpublished addendum to an essay that Bohr (1906) wrote in order to
win the Royal Danish Academy prize on the theme `The surface tension
of water' and in a paper by Harrison (1909).'' (Bohr's essay is \cite{bohr}.)

A key feature of the present analysis is the reduction of the physical problem (\ref{classical problem} a-g) to an equation \eqref{eq:F=0} for one $2\pi$-periodic function of one real variable and two parameters.

\subsection*{Physical Problem}
This problem is described in detail in \cite{Toland-heavy}. To summarise, we consider waves on the surface of an infinitely deep ocean beneath an elastic membrane under the assumption that there is no friction between the membrane and the fluid.
Since the water depth is infinite, there is no loss (after normalizing length scales appropriately) in restricting attention to waves that have period $2\pi$ in the horizontal direction.
The fluid's Eulerian velocity field is supposed to be two-dimensional and
stationary at the same time as the material of the membrane is in motion, driven by gravity, by forces and couples due to its elasticity and by pressure from the fluid.
The resulting mechanical behaviour of the surface membrane is modelled by regarding its cross-section as a heavy, unshearable, hyperelastic Cosserat rod, using the treatment in
Antman \cite[Ch.~4]{Antman}. We deal with this first.

\textbf{Membrane Elasticity.}
Let $(x,0) \in \R^2$ be the rest position of a material point in the membrane cross-section and let $\br(x) \in \R^2$ be its position after deformation. In the notation of Antman, $\th(x)$ is the angle between the horizontal positive semi-axis and the vector $\br'(x)$
(where $'$ means $d/dx$). Let
\begin{equation} \label{def nu mu}
\nu(x)=|\br'(x)| \, \text{ and } \,
\mu(x)= \th'(x).
\end{equation}
Thus $\nu(x)$ is the stretch of the membrane at the point $\br(x)$ and
\[
\widehat\s(\br(x)) = \frac{\mu(x)}{\nu(x)}
\]
is its curvature.
We suppose that the elastic properties of the membrane are described as follows.
\begin{hyp} \label{hyp:exists E}
\emph{(Hyperelasticity)} There exists a $C^\infty$-function $E(\nu,\mu) \geq 0$,~
$\nu > 0$, $\mu \in \R$,
such that, after the deformation $(x,0) \mapsto \br (x)$, the elastic energy in the reference segment $\{ (x,0) : x \in [x_1,x_2]\}$ is
\[
\mE (\br) = \int_{x_1}^{x_2} E(\nu(x), \mu(x)) \, dx,
\]
where $\nu(x), \mu(x)$ are defined in \eqref{def nu mu}. $E$ is called the stored energy function.
\end{hyp}
We also assume that the reference configuration, unstretched and unbent, is a local minimum of the elastic energy, which is locally convex.
\begin{hyp} \label{hyp:E=0 and convex}
\emph{(Rest state and local convexity)}
\[
E(1,0) = E_1(1,0) = E_2(1,0) = E_{12}(1,0)=0,\quad
E_{11}(1,0) > 0,~E_{22}(1,0)>0.
\]
Subscripts 1, 2 denote partial derivatives with respect to $\nu$, $\mu$, respectively.
\end{hyp}

\begin{remark*} Since this is a study of bifurcating waves, regularity questions that are significant for large-amplitude waves are unimportant here. It is therefore for convenience only that we suppose $E\in C^\infty$.
With a little more technical effort the theory can be developed for $E$ with much less regularity.
\qed\end{remark*}

\textbf{Travelling Waves.}
For a periodic travelling wave, the position at time $t$ of the material
point with Lagrangian coordinates $(x,0)$ in the undeformed membrane is assumed to be given by
\begin{equation*}
\bR(x ,t):\,=\big(x +dt +u(x -ct),\, v(x -ct)\big),
\end{equation*}
where $u$ and $v$ are $2\pi$-periodic and $c,\,d \in \R $. Let $c_0 = c+d$.
Then the surface profile at time $t$ is the curve
\begin{align*}
\mS_t &= \{(x +dt +u(x -ct),\, v(x -ct)) : x \in \R \} \\
&= \{(s+u(s),v(s)):s \in \R\}+(c_0t,0) \\
&=: \mS+(c_0t,0).
\end{align*}
Thus $\mS_t$ is represented by a profile $\mS$ of fixed shape propagating from left to right, say, without changing shape at a constant
velocity $c_0$ while at the same time the material point with
Lagrangian coordinates $(x ,0)$ has temporal period $2\pi/c$ relative to
a frame moving with speed $d$. We refer to $c_0$ as the \emph{wave speed} and to $d$
as the \emph{drift velocity of the membrane}, both
calculated relative to the fluid at rest at infinite depth.

Since the membrane is in motion relative to the moving frame, there are inertial effects due to its mass, but it is supposed throughout that there is no friction between the fluid and the membrane.
Under this assumption, it was shown in \cite{Toland-heavy} that the inertial effects lead to an equivalent steady-wave problem in which the wave speed and the drift velocity coincide, and the stored energy function is perturbed by a quadratic term.

\begin{remark*}
To motivate this observation and what follows, it is worth observing the corresponding situation that arises when travelling-wave solutions $R$ to an analogous nonlinear wave equation
\[
(E'(R_x))_x = \rho R_{tt}, \quad \rho>0,
\]
are sought.
This equation describes longitudinal motion in a one-dimensional elastic rod for which $E$ is the stored energy function, $\rho$ is the undeformed density and $R(x,t)\in \R$ is the position at time $t$ of the point with Lagrangian coordinate $x\in \R$. It is significantly simpler than the hydroelastic wave problem because here there are no body forces and because the shape of the rod does not change, only the relative positions of its material points in a straight line change with time.

First note that a {stationary ({time-independent}) solution} satisfies {$(E'(R_x))_x =0$} and is given by a critical point of the potential energy functional
\[
\int_0^{2\pi} E (R_x)\,dx.
\]
On the other hand, $R$ is a periodic travelling wave with drift velocity $d$ if
\[
R(x,t) =c_0t+r(x-ct),
\]
for some $c$ and $c_0$, where $r(s+2\pi) = 2\pi+r(s)$ and $d=c_0-c$.
Here we regard the variable $s$ $(\,= x-ct)$ as a steady Lagrangian coordinate for travelling waves.
The equation for $r(s)$ is then
\begin{equation} \label{ogue}
\{ E'(r_s) -c^2\rho r_s \}_s =0,
\end{equation}
which corresponds to critical point of
\[
\int_0^{2\pi} \Big(E (r_s)-\frac{\rho}2 c^2r_s^2 \Big)\,ds.
\]
Therefore periodic travelling waves correspond to a boundary-value problem for stationary solutions of a nonlinear wave equation with a \emph{different} stored energy function
\[
\underline E (p) := E(p) - \frac \rho 2 c^2p^2,
\]
instead of $E(p)$.
In \cite{Toland-heavy}, $\underline E$ is called the pseudo-potential energy of travelling waves.
 If $c$ is large $\underline E$ is not convex, even when $E$ is convex \cite{strain}.

 Note that if $r$ and $c$ correspond to a travelling wave, then a family, parametrized by $c_0\in \R$, of travelling waves with drift velocity $d$, is given by
\[
R(x,t)=c_0t+r(x-ct), \qquad d = c_0-c.
\]
However, in the hydroelastic wave problem, the interaction of the membrane with the fluid means that the dependence of waves on both parameters is not so trivial. In fact, we will see that the solutions $r(s)=s$,  $c_0$ and $c$ arbitrary, to this one-dimensional problem, when combined with a wave profile with zero elevation, is a family of trivial solutions of the hydroelastic wave problem which we describe next. They correspond to an undeformed membrane drifting with velocity $d=c_0-c$ on the surface of a uniform flow with horizontal velocity $c_0$.
\qed \end{remark*}

To summarise the analogous treatment in \cite{Toland-heavy} of the equivalent hydroelastic travelling-wave problem, let
$(s,0)$ be the steady Lagrangian coordinate, $\br(s)$ its deformed position
and let $\mu$ and $\nu$ be defined as in \eqref{def nu mu}, with $s$ in place of $x$.
 Let $\rho$ be the density of the \emph{undeformed} membrane section and
let $\eta (s)= \textbf{j} \cdot \br(s)$, where $\textbf{j}$ is the unit vector in the upward vertical direction ($\eta$ is the wave elevation).
\begin{subequations} \label{classical problem}
In \cite[eqn.~(1.8)]{Toland-heavy} it is shown that $\nu$, $\mu$ and $\eta$ satisfy
\begin{equation}
\frac{d~}{ds} \Big\{
\nu(s) \, E_1(\nu(s),\mu(s)) -\frac{\rho}{2}\, c^2 \nu(s)^2
+ \mu(s)\, E_2(\nu(s),\mu(s)) - E(\nu,\mu) -g \rho \eta(s)\Big\} = 0,
\end{equation}
which is the analogue of \eqref{ogue} in the present situation.
The pressure $P$ in the fluid, internal forces and gravity combine to deform the membrane. Thus, from \cite[(1.7e)]{Toland-heavy},
\begin{equation} \label{def P}
P(\br) = \frac{1}{\nu} \,
\Big( \frac{ E_2(\nu, \mu)_s}{\nu} \Big)_{\!s}
- \frac{\mu}{\nu} \big( E_1(\nu, \mu) -c^2\rho \nu \big) + \frac{g\rho \cos \vartheta}{\nu},
\end{equation}
where, as in \cite{Toland-heavy}, we assume that the material in one period of the membrane surface is a deformation of an interval of length $2\p$ of the reference membrane\footnote{The periodic wave does not require additional membrane mass as it passes underneath.}:
\begin{equation}  \label{r constrain}
\mS \cap \big([ 0, 2\p]\times \R\big)
= \{ \br(s) : s \in [s_0, s_0 + 2\p] \} \ \text{ for some} \ s_0 \in \R.
\end{equation}

\textbf{Fluid Motion.}
In the moving frame the fluid velocity field is two dimensional, stationary
and irrotational, the membrane cross-section coincides with a streamline and the flow at infinite depth is horizontal with velocity $-c_0$.
The surface $\mS$ is a zero level line for the stream function $\psi(X,Y)$, which is harmonic and represents horizontal laminar flow at infinite depth. So
\begin{align}
\label{D psi=0}
\D \psi = 0 & \quad \text{below } \mS, \\
\label{psi=0}
\psi = 0 & \quad \text{on } \mS
\ \text{(the kinematic boundary condition),} \\
\label{psi bottom}
\gr \psi (X,Y) \to (0,c_0) & \quad \text{as } Y \to - \infty.
\end{align}

\textbf{Membrane-fluid interaction.}
The dynamic boundary condition takes the form
\begin{equation} \label{psi dynamics}
-\frac12\, |\gr \psi(X,Y)|^2 - g Y +\frac{c_0^2}{2} = P(\br(s))
\quad \text{when } (X,Y) = \br(s) \in \mS.
\end{equation}
Here $P$ is given by \eqref{def P} and the left side of \eqref{psi dynamics} is the pressure in the fluid.
\end{subequations}

\subsection{The Free-Boundary Problem}\label{fbp}
 A steady hydroelastic wave with speed $c_0$ and drift velocity $d$ is a non-self-intersecting smooth $2\pi$-periodic curve $\mS$ in the plane for which there exists a solution of \eqref{classical problem} with $c = c_0-d$. Since we are interested in symmetric waves, we  require  that $\mS$ is symmetric about a vertical line.
To examine the solution set of this elaborate system we will use standard bifurcation-theory methods based on the implicit function theorem.
 Before going into the details, we give a schematic outline of what we have achieved (much remains to be done).
 Throughout we treat $g$ (gravity) and $\rho$ (density of the undeformed membrane) as constants and regard
\[
\lm_1 := c^2 \rho, \quad \lm_2 := c_0^2, \quad \lm=(\lm_1,\lm_2)\in \R^2,
\]
as the physical parameters. (Because bifurcation theory studies the existence of solutions with small amplitudes and slopes, the question of self-intersection of wave surfaces does not arise in the present study. By contrast, in the theory of large-amplitude waves \cite{Baldi-Toland-var}, considerable effort is required to ensure that there is no self-intersection.)

\subsection{Bifurcation Picture} \label{picture}
In this section we explain schematically the geometric picture of small amplitude hydroelastic waves close to a bifurcation point.
See Theorems \ref{thm:simple bif} and \ref{thm:secondary bif} for a detailed statement.

The first observation is that, for all choices of the two independent parameters, the problem, when linearized at the trivial solution of uniform horizontal flow under an undeformed membrane, has at most two linearly independent solutions. If there is only one linearized solution when $(\lm_1,\lm_2) = (\lm_1^*,\lm_2^*)$ say, there is at most only one linearized solution for all nearby $(\lm_1,\lm_2)$.
Therefore, with either one of the parameters held fixed, there are bifurcations from simple eigenvalues with respect to the other parameter. Their union is a two-dimensional sheet of solutions which bifurcates from $(\lm_1^*,\lm_2^*)$. The details are in Section \ref{simple}.

On the other hand, suppose that at $\lm^*= (\lm_1^*,\lm_2^*)$ the linearized problem has two solutions $\cos (k\t)$ and $\cos (l\t)$, where $k$ and $l$ are positive integers
with $\max\{k,l\} / \min\{k,l\} \notin \Z$.
By restricting attention to solutions in $Z_k=\Span \{\cos (jk\t):j \in \N\}$, or in $Z_l=\Span \{\cos (jl\t): j \in \N\}$, the problem may be reduced to one of ``bifurcation from a simple eigenvalue'' for particular solutions that have \emph{minimal period} $2\pi/k$ or $2\pi/l$, respectively.
This is straightforward and similar to what is done in Section \ref{simple}.
Locally we obtain a sheet of solutions of minimal period $2\pi/k$ and a sheet of solutions of minimal period $2\pi/l$.
Each of these sheets is locally the graph of a function which gives $\lm_2$ in terms of the wave amplitude and $\lm_1$ (the roles of $\lm_2$ and $\lm_1$ can be reversed).
We will refer to these solutions with minimal periods less than $2\pi$ as ``special'' solutions.

 In Section \ref{double} we show that in addition to these two sheets of ``special'' solutions there is a two-dimensional sheet of ``general'' solutions and that the sheet of ``general'' solutions intersects each of the sheets of ``special'' solutions in a curve.
The solutions on the sheet of ``general'' solutions, except
where it intersects the sheet of ``special'' solutions,
have minimal period $2\pi$. Therefore the general solutions on this sheet represent a symmetry-breaking (or period-multiplying) secondary bifurcation on the curves of special solutions.
 This is the hydroelastic analogue of Wilton ripples, a type of water wave which arises in the presence of surface tension \cite{JT,TJ,wilton}. In Wilton-ripple theory there are also two parameters, the wave speed and the surface tension coefficient (which measures surface elasticity). Wilton ripples bifurcate from uniform streams at certain values of the wave speed, when the surface tension has particular values.
 Here the two parameters are independent of the elasticity of the membrane. Therefore the wave and drift speeds can conspire to produce ripples, for \emph{any} prescribed elastic membrane.

After Lyapunov-Schmidt reduction, when the linearized problem has two independent solutions, $\cos(k\t)$ and $\cos(l\t)$, there are two bifurcation equations in four unknowns
\begin{equation}\label{qual}
\Phi_k (t_1,t_2,\lm_1,\lm_2) = 0, \quad
\Phi_l (t_1,t_2,\lm_1,\lm_2) = 0,
\end{equation}
where $t_1$ and $t_2$ near 0 are the coefficients of $\cos (k\t)$ and $\cos (l\t)$, respectively.
Solutions with $t_1=0$ or $t_2=0$ correspond to ``special'' solutions.
The hypothesis that $\max\{k,l\} / \min\{k,l\} \notin \Z$
leads to the key observation that, for all $t_1,t_2$ near 0,
\[
\Phi_k (0,t_2,\lm_1,\lm_2) = 0, \quad
\Phi_l (t_1,0,\lm_1,\lm_2) = 0.
\]
Then each of the sheets of ``special'' solutions is found by solving one equations in three unknowns, as is done in Section \ref{simple}.

To find the ``general'' solutions, we seek solutions of \eqref{qual} with neither $t_1$ nor $t_2$ equal to 0.
This problem is reduced in Section \ref{double} to a desingularized one of the form
\begin{equation} \label{help}
\Psi_k (t_1,t_2,\lm_1,\lm_2) = 0, \quad
\Psi_l (t_1,t_2,\lm_1,\lm_2) = 0,
\end{equation}
for which $(0,0,\lm^*)$ is a solution
and $\partial (\Psi_k,\,\Psi_l)/\partial (\lm_1,\,\lm_2)$ at $(0,0,\lm^*)$ is invertible.
Then the implicit function theorem gives $\lm$ in a neighbourhood of $\lm^*$ in $\R^2$ as a function of $(t_1,t_2)$ in a neighbourhood of the origin in $\R^2$.
This is the sheet of general solutions we are seeking. It intersects the ``special'' sheets when $t_1=0$ or $t_2=0$.
The present analysis, which yields a \emph{qualitative local description} of the set of all $2\pi$-periodic hydroelastic waves near a bifurcation point, is sufficient to show that secondary bifurcations occur.

\begin{figure}[ht]
\quad
\includegraphics[scale=0.75]
{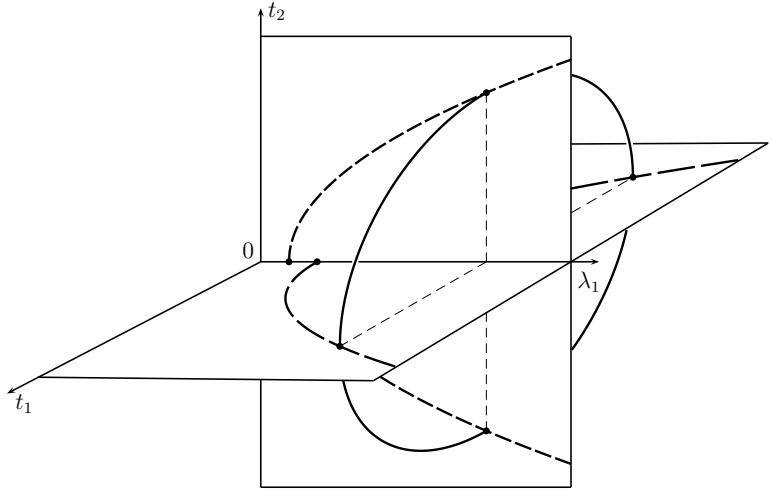}
\caption{A possible bifurcation diagram in the space $(t_1,t_2,\lm_1)$, when $\lm_2$ is fixed. The dashed curves correspond to the two branches of ``special'' solutions on the planes $t_1=0$ and $t_2=0$, and the solid curve gives the secondary branch of ``general'', symmetry-breaking solutions.}
\label{fig 1}
\end{figure}

\begin{remark*}
A more detailed geometrical description of the bifurcating sheets depends on the coefficients in Taylor series arising in the bifurcation equations,
and to calculate their values can be very complicated.
For example, suppose that we want to draw a picture of the solution set when one of the parameters, $\lm_2$ say, is fixed, and assume that \eqref{help} has the form
\begin{align*}
f(\lm_1) + (At_1 ^2+Bt_1t_2 +Ct_2^2) -\lm_2 &=0,\\
g(\lm_1) + (\a t_1 ^2+\b t_1t_2 +\g t_2^2) -\lm_2 &=0,
\end{align*}
where $f$ and $g$ are smooth functions and $A,\,\a,\,B,\,\b,\,C,\,\g$ are constants.
Suppose also that
\[
f'(\lm_1^*) \neq g'(\lm_1^*).
\]
Then it is clear from the implicit function theorem that locally
\[
\lm_1 = \Lambda \big( (A-\a)t_1^2 + (B-\b)t_1t_2 + (C-\g)t_2^2 \big),
\]
for some function $\Lambda$ with $\Lambda'(0)\neq 0$.
Therefore, for fixed $\lm_2$ close to $\lm_2^*$
the solution set is given locally by the level set of the function
\[
f \big( \Lambda \big( (A-\a)t_1^2 + (B-\b)t_1t_2 + (C-\g)t_2^2 \big) \big)
+ At_1^2 + Bt_1t_2 + Ct_2^2.
\]
The complexity of the dependence of this set on $A,\,\a,\,B,\,\b,C,\,\g,\,f$ and $g$ is evident.
These quantities in turn depend, in an explicit but highly non-trivial way, on the elastic properties of the membrane.
\qed\end{remark*}

\begin{remark*} In the case when $\max\{k,l\} / \min\{k,l\} \in \Z$, analysis is still possible, but the details are yet more complicated.
\qed\end{remark*}

\begin{remark*}
In the present work solutions will be found for values of parameters that are not covered by the maximization argument in \cite{Toland-heavy}.
Indeed, a convexity hypothesis of the form $E_{11}(\nu,\mu)>c^2\rho=\lm_1$ was crucial in the existence proof of \cite{Toland-heavy},
whereas both primary and secondary bifurcations occur here provided only that $\lm_1 \neq E_{11}$, see Lemmas \ref{lemma:exists one} and \ref{lemma:kl}.
\qed\end{remark*}

\section{Mathematical formulation}
Suppose that in the free-boundary problem \eqref{classical problem}, the \emph{shape of $\mS$} is known. Then $\psi$ is given by the unique solution to (\ref{classical problem}\,d,e,f).
Thus the kinetic and potential energies of the fluid are determined solely by the shape of $\mS$.
On the other hand, the elastic and gravitational potential energy of the membrane are determined by the positions of the material points $(x,0)$ in the deformed membrane. To deal with this distinction, and ultimately to prove in Section \ref{further} that only the shape matters, suppose that the shape of $\mS$ is given by a parametrization
\[
\mS=\{\varrho(\t): \t \in \R\}, \ \ \text{where} \
\varrho (\t+2\pi) = (2\pi,0) + \varrho(\t).
\]
Then, as in \cite{Toland-heavy}, we seek $\bR$ for travelling waves in the form
\begin{equation}\label{chii}
\bR(x,t):\,= (c_0 t,0) + \varrho(\chi(x -ct)),
\end{equation}
where $\chi:\R \to \R$ is a diffeomorphism with
$\chi (s+2\pi) = 2\pi + \chi(s)$.
As before, $s$ is the steady travelling-wave Lagrangian coordinate,
the unknowns are $\varrho(\t)$ and $\chi(s)$,
and $\br(s) = \varrho(\chi(s))$, so that $\mS = \{\br(s):s \in \R\}$.
To develop this approach, we recall the class of parametrizations in \cite{Toland-heavy}, specially tailored for this problem.

Let $w$ be a $2\p$-periodic real-valued function with second derivative locally square-integrable on $\R$, and let $\mC$ denote its Hilbert transform. Then
\begin{equation}\label{rho}
\varrho(w)(\t) := (-\t-\mC w(\t), \, w(\t)),
\quad \t \in \R,
\end{equation}
is a $2\pi$-periodic curve in the plane.
Thus $w$ is the unknown that describes the wave shape.
The other unknown is the stretch of the reference membrane.
To describe it we follow \cite{Baldi-Toland-var} by introducing diffeomorphisms $\kappa(\t)$ ($\kappa = \chi^{-1}$ in \eqref{chii}) of the interval $[0,2\p]$ such that $\chi(0) = 0$ and $\chi(2\p)=2\p$.
Then if the material point $s$ of the membrane in the reference configuration is
\begin{equation} \label{x=chi(tau)}
s = \kappa(\t)
\quad
\forall\, \t \in \R,
\end{equation}
its position $\br(s)$ after deformation is
\[
\br(s) = \varrho(w)(\t),
\]
and the stretch of the membrane is given in terms of $\kappa$ and $w$ by
\[
\nu(s) = \frac{|\varrho(w)'(\t)|}{\kappa'(\t)} =
\frac{\Om(w)(\t)}{\kappa'(\t)} \,,
\]
where
\[
\Om (w)(\tau) = \sqrt{w'(\tau)^2+ (1+\mC w'(\tau))^2}.
\]
Also, since $\br(s) = \varrho(w)(\t)$, there is a useful formula for the curvature,
\[
\widehat\s(\br(s)) = \sigma (w)(\tau) =
- \frac{1}{\Om(w)(\t)}\,\, \mC\Big(\frac{ \Om (w)'(\tau)}{\Om(w)(\tau)}
\Big).
\]
Roughly speaking, given a profile parametrized by \eqref{rho}, the diffeomorphisms \eqref{x=chi(tau)} describe the family of all \emph{physical deformations $(s,0) \mapsto \br(s)$ of the reference state} which produce the same profile $\varrho(w)$.
Note that the parametrization of a curve and the stretch of the material in it are independent.
It will be convenient later to replace $\kappa$ with the new unknown
\[
\xi(\t) := \kappa'(\t) - 1.
\]
Then $\xi$ has zero mean, $$ \frac{1}{2\pi}\intp \xi(\tau) \,d\tau=0, $$ because $\kappa(0) = 0$ and $\kappa(2\p)=2\p$.
In \cite{Toland-heavy} it is shown how solutions to this hydroelastic wave problem with a heavy membrane corresponds to critical points of a Lagrangian which, in terms of the unknown $(w,\xi)$, is written
\begin{align} \label{J}
J(w,\xi) := {} &
 \frac{c_0^2}{2} \intp w \mC w' \, d\t \,
- \frac{g}{2} \intp w^2 (1 + \mC w') \, d\t \\
& - \intp (1 + \xi) \, E \Big( \frac{\Om(w)}{1+\xi}\,, \frac{\Om(w)\, \s(w)}{1+\xi} \Big) \, d\t \,
+ \frac{c^2 \rho}{2} \intp \frac{\Om(w)^2}{1+\xi}\,d\t \notag \\
& - g \rho \intp w (1+\xi) \, d\t. \notag
\end{align}
The first term in the right in \eqref{J} is the fluid's kinetic energy in one period, relative to the moving frame;
the second term, with a minus, is the change in gravitational potential energy of the same body of fluid relative to a uniform flow;
the third is minus the elastic energy of one period of the deformed membrane;
the fourth is plus the kinetic energy of the membrane;
the fifth term is minus the gravitational potential energy of one period of the membrane. (For the derivation, see the discussion leading to \cite[(2.18)]{Toland-heavy}.)

This will be the starting point for this analysis.
Local bifurcation theory will be used to give a complete descriptions of all small-amplitude $2\pi$-periodic waves represented by critical points of $J$ close to a bifurcation point.

\section{The equations}
Define the notation $(E - \gr E \cdot)(\nu,\mu)$ by
\[
(E - \gr E \cdot)(\nu,\mu) :=
E (\nu,\mu) - \nu\, E_1 (\nu,\mu) - \mu\, E_2 (\nu,\mu).
\]
Suppose that $w$ and $\xi$ are small in an appropriate norm and that $J$ is differentiable at $(w,\xi)$.
Then the partial derivative with respect to $\xi$ in a direction $\eta$, where $[\eta]=0$, is
\[
d_\xi J(w,\xi)\, \eta
= - \intp \eta \, \Big\{ (E - \gr E \cdot)\Big( \frac{\Om(w)}{1+\xi}\,, \frac{\Om(w) \s(w)}{1+\xi} \Big) \, \,
+ \frac{c^2 \rho}{2}\, \Big( \frac{\Om(w)}{1+\xi}\Big)^2 + \, g \rho w \Big\} \, d\t.
\]
Let $\mL[w]$, depending on $w$, be the linear operator defined in \cite[Section 4.5]{Baldi-Toland-var} by
\[
\mL[w](u) := \frac{ w' u + (1+\mC w')\, \mC u}{\Om(w)^2}\,,
\]
with the property that
\begin{equation} \label{dirs}
d_w \Om(w) h = \Om(w) \, \mL[w](h'),
\quad
d_w \big(\Om(w) \s(w) \big) h = - \mC \big(\mL[w](h')\big)'.
\end{equation}
Then the partial derivative of $J(w,\xi)$ with respect to $w$ in the direction $h$ is
\begin{align*}
d_w J(w,\xi) h \, &=
\intp h \, \Big\{ c_0^2 \, \mC w' - g w (1+\mC w') - g \, \mC(w w') - g \rho (1+\xi) \Big\}\, d\t \\
 &\quad + \intp \Big\{ \frac{c^2 \rho \,\Om(w)}{1+\xi}\, - E_1\Big( \frac{\Om(w)}{1+\xi}\,, \frac{\Om(w) \s(w)}{1+\xi} \Big) \Big\} \, \Om(w) \, \mL[w](h') \, d\t \,
\notag\\&\quad + \intp E_2\Big( \frac{\Om(w)}{1+\xi}\,, \frac{\Om(w) \s(w)}{1+\xi} \Big) \, \big( \mC \mL[w](h') \big)' \, d\t.
\end{align*}
Now, suppose that $(w,\xi)$ is a critical point of $J$.
Define the projection
\[
\mP u := u - \frac{1}{2\pi}\intp u(\tau) \,d\tau,
\]
for all $2\p$-periodic, locally integrable functions $u$.
Then $\mP u$ has zero mean on $[0,2\pi]$.
Note that the operator $\mL[w]$ is independent of the mean of $w$.
A simple calculation shows that $d_w J(w,\xi)=0$ if and only if
\[
\frac{1}{2\pi} \intp (w + w\,\mC w')\,d\tau + \rho = 0
\quad \text{and} \quad
d_w J(w,\xi) \mP h = 0 \quad \forall\, h.
\]
Hence it suffices to study the equation $dJ_0(w,\xi) =0$, where $J_0$ is defined by
\begin{gather*}
J_0 (w,\xi) := J(w,\xi) + g\p\,\Big( \frac{1}{2\pi} \intp w \mC w'\,d\t + \rho \Big)^2,
\\
\intertext{over a class of functions satisfying}
\intp w(\tau) \,d\tau=\intp \xi(\tau) \,d\tau=0,
\end{gather*}
because then $dJ_0(w,\xi)=0$ implies that $dJ(w^*,\xi)=0$, where
\[
w^* := w - \frac{1}{2\pi}\intp w\,\mC w'\,d\t -\rho.
\]
From a calculation similar to that for $J$, partial derivatives of $J_0$ are given by
\begin{equation} \label{dxi J0}
d_\xi J_0(w,\xi) \,\eta = 
 - \intp \eta \, \Big\{ (E - \gr E \cdot)\Big( \frac{\Om(w)}{1+\xi}\,, \frac{\Om(w) \s(w)}{1+\xi} \Big) \, \,
+ \frac{c^2 \rho}{2}\, \Big( \frac{\Om(w)}{1+\xi}\Big)^2 + \, g \rho w \Big\} \, d\t
\end{equation}
and
\begin{align} \notag
d_w J_0 (w,\xi) h\, = {} &
\intp h \, \Big\{ c_0^2 \, \mC w' - g w (1+\mC w') - g \, \mC(w w') - g \rho (1+\xi) \Big\}\, d\t
\notag \\\notag
& + \intp \Big( c^2 \rho \, \frac{\Om(w)}{1+\xi}\, - E_1\Big( \frac{\Om(w)}{1+\xi}\,, \frac{\Om(w) \s(w)}{1+\xi} \Big) \Big) \, \Om(w) \, \mL[w](h') \, d\t \,
\\&+ \intp E_2\Big( \frac{\Om(w)}{1+\xi}\,, \frac{\Om(w) \s(w)}{1+\xi} \Big) \, \mC (\mL[w](h'))' \, d\t
\notag \\
& + \, 2g \, \Big( \frac{1}{2\pi} \intp w\mC w'\,d\t + \rho \Big) \, \intp h\, \mC w' \, d\t.
\label{dw J0}
\end{align}
In \cite[Sections 4.1 \& 4.2]{Baldi-Toland-var} the membrane density $\rho$ is zero. However, the calculations there are easily extended to take account of the extra terms here which involve $\rho > 0$.
To proceed, we adapt the notation from \cite{Baldi-Toland-var} to the case of positive $\rho$.
 For $u$ with zero mean, let
\[
\gr I_0 := \Big( c_0^2 + \frac g\pi\intp w \mC w'\,d\t + 2g\rho \Big) \, \mC w' - g w (1+\mC w') -g \, \mC (w w') - g \rho (1+\xi).
\]
With the $L^2$-adjoint of the inverse operator $\mL[w]^{-1}$ given by
\[
(\mL[w]^{-1})^* (u) = w' u + \mC ((1+\mC w')u),
\]
let
\begin{equation} \label{m0}
m_0 := (\mL[w]^{-1})^* \Big( \int_0^\t \mP(\gr I_0) \, dt \Big).
\end{equation}
Then, as in \cite{Baldi-Toland-var},
the equations for critical points of $J_0$ can be written as follows:
\begin{subequations} \label{P Euler both}
\begin{equation}
\label{P Euler xi}
\mP \Big\{ (E - \gr E \cdot)\Big( \frac{\Om(w)}{1+\xi}\,, \frac{\Om(w) \s(w)}{1+\xi} \Big) \, + \frac{c^2 \rho}{2}\,
\Big( \frac{\Om(w)}{1+\xi}\Big)^2 \Big\} + g \rho w = 0,
\end{equation}
\begin{multline}
\label{P Euler w}
\mP E_2\Big( \frac{\Om(w)}{1+\xi}\,,
\frac{\Om(w) \s(w)}{1+\xi} \Big)
+ \mC \Big\{ \int_0^\t \mP \Big( m_0 + \Om(w)
E_1\Big( \frac{\Om(w)}{1+\xi}\,, \frac{\Om(w) \s(w)}{1+\xi} \Big) \\
- c^2 \rho\, \frac{\Om(w)^2}{1+\xi}\Big) \, dt \Big\} = 0.
\end{multline}
\end{subequations}
Note that $(w,\xi)=(0,0)$ solves \eqref{P Euler both} for all values of $c,c_0,g,\rho$.

The free-boundary problem in Section \ref{fbp} is for symmetric waves, so we can simplify matters  by studying the bifurcation problem in spaces of even functions. This is what we do subsequently. Note that, if $w$ and $\xi$ are even functions,
then $\mC w'$, $\Om(w)$, $\s(w)$ and $1+\xi$ are also even.
\begin{lemma} \label{lemma:even}
Suppose that $(w,\xi)$ are even functions such that
\begin{equation} \label{critical pt}
d J_0 (w,\xi)\,(h,\eta) = 0
\end{equation}
for all even $(h,\eta)$ with sufficient regularity.
Then \eqref{critical pt} holds for all $(h,\eta)$.
\end{lemma}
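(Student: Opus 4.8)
The plan is to exploit the symmetry structure of the functional $J_0$ under the reflection $\t \mapsto -\t$. First I would observe that $J_0$ is invariant under the involution $S$ acting by $(w,\xi)(\t) \mapsto (w,\xi)(-\t)$: every term in \eqref{J}, and the correction term defining $J_0$, is built from $w$, $\xi$, $\mC w'$, $\Om(w)$ and $\s(w)$ through integration over a full period, and the remark preceding the lemma records that $\mC w'$, $\Om(w)$, $\s(w)$ and $1+\xi$ are even whenever $w,\xi$ are even. More precisely, the Hilbert transform $\mC$ anticommutes with the reflection (it sends even functions to odd and odd to even), so that $\mC w'$ is even when $w$ is even; one checks term by term that $J_0(Sw, S\xi) = J_0(w,\xi)$ for all admissible $(w,\xi)$, not just even ones, because the substitution $\t \mapsto -\t$ in each periodic integral leaves it unchanged.

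Next I would differentiate this identity. Since $J_0 \circ S = J_0$ on the whole space, the chain rule gives $dJ_0(Sw,S\xi)\,(Sh,S\eta) = dJ_0(w,\xi)\,(h,\eta)$ for all directions $(h,\eta)$. Specializing to even $(w,\xi)$, so that $(Sw,S\xi) = (w,\xi)$, yields
\[
dJ_0(w,\xi)\,(Sh,S\eta) = dJ_0(w,\xi)\,(h,\eta)
\quad \text{for all } (h,\eta).
\]
Now decompose an arbitrary direction into its even and odd parts, $h = h_+ + h_-$ with $h_\pm = \tfrac12(h \pm Sh)$, and likewise $\eta = \eta_+ + \eta_-$. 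Applying the identity above to the odd part, $S h_- = -h_-$ and $S\eta_- = -\eta_-$, gives $dJ_0(w,\xi)(h_-,\eta_-) = -dJ_0(w,\xi)(h_-,\eta_-)$, hence $dJ_0(w,\xi)(h_-,\eta_-) = 0$. Therefore $dJ_0(w,\xi)(h,\eta) = dJ_0(w,\xi)(h_+,\eta_+)$, and by hypothesis \eqref{critical pt} the right-hand side vanishes since $(h_+,\eta_+)$ is an even direction. This proves $dJ_0(w,\xi)(h,\eta) = 0$ for all $(h,\eta)$.

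The only real point requiring care is the equivariance claim $J_0 \circ S = J_0$, together with the sign bookkeeping for $\mC$ under reflection. Using the explicit formulas \eqref{dxi J0}–\eqref{dw J0} one can instead verify directly that $d_\xi J_0(w,\xi)(S\eta) = d_\xi J_0(w,\xi)(\eta)$ and $d_w J_0(w,\xi)(Sh) = d_w J_0(w,\xi)(h)$ whenever $(w,\xi)$ is even: in \eqref{dxi J0} the brace is an even function of $\t$ (being a function of $\Om(w)$, $\s(w)$, $w$), so pairing it against $\eta$ versus $S\eta = \eta(-\cdot)$ gives the same integral; in \eqref{dw J0} one uses that $\mL[w]$ maps odd functions to even and even to odd when $w$ is even, that $\mC$ reverses parity, and that $h'$ has the opposite parity to $h$, so each integrand transforms consistently under $\t \mapsto -\t$. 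This direct check avoids any appeal to differentiating the functional identity and is the approach I would actually write out; the parity rules for $\mC$ and for $\mL[w]$ are the substantive ingredients, and everything else is a change of variables in a periodic integral.
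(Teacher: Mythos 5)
Your proposal is correct and, in its final form (the direct check via the even/odd decomposition of $(h,\eta)$ and the parity rules for $\mC$ and $\mL[w]$), is essentially identical to the paper's proof, which likewise reduces to observing that \eqref{dxi J0} and \eqref{dw J0} vanish on odd directions because $\mL[w](h')$ is odd for odd $h$ when $w,\xi$ are even. The equivariance framing $J_0 \circ S = J_0$ in your first two paragraphs is a valid alternative packaging of the same fact, but the substance of the argument is the same.
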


\begin{proof}
Since every $2\p$-periodic function is the sum of even and odd functions, by the hypothesis it suffices to observe that
\eqref{critical pt} holds for $(h,\eta)$ odd when $w,\,\xi$ are even.
This follows by \eqref{dxi J0} and \eqref{dw J0}, since $\mL[w](h')$ is odd for odd $h$.
\end{proof}

\section{A further simplification} \label{further}
In this section we use the implicit function theorem to show that, for solutions of \eqref{P Euler xi}, $\xi$ is a function of $(w,\lambda_1)$ near $w=\xi=0$. This means that the stretch variable $\xi$ can be eliminated and the problem reduced to one for the unknown shape which is given by $w$.
For $k \in \N$, let $H^k_0$ denote the space of real-valued, $2\p$-periodic, even, zero-mean functions, with $k$th weak derivative locally square-integrable. For $r>0$, let $B_r(X)$ denote the open ball of radius $r$ centred at the origin in a Banach space $X$.
Fix $r>0$ such that
\[
\frac12\, \leq \Om(w) \leq 2,
\quad
|\s(w)| \leq 1,
\quad
|\xi| \leq \frac12
\]
for all $(w,\xi)\in
B_r(H^3_0) \times B_r(H^1_0)$.
Then a map $M \colon B_r(H^3_0) \times B_r(H^1_0) \times (0,+\infty) \to H^1_0$ may be defined by
\[
M(w,\xi,\lm_1) := \mP \Big\{ (E - \gr E \cdot)\Big( \frac{\Om(w)}{1+\xi}\,, \frac{\Om(w) \s(w)}{1+\xi} \Big) \, + \frac{\lm_1}{2}\,
\Big( \frac{\Om(w)}{1+\xi}\Big)^2 \Big\} + g \rho w,
\]
because $E$ is smooth, and $\Om(w)/(1+\xi)$ and $\Om(w) \s(w) / (1+\xi)$ are bounded functions.
Thus, the Euler equation \eqref{P Euler xi} may be written as
\[
M(w,\xi,\lm_1) = 0.
\]
Moreover, $M(0,0,\lm_1)=0$ for all $\lm_1$.
We note that
\[
M \in C^\infty \big( B_r(H^3_0) \times B_r(H^1_0) \times (0,+\infty), \, H^1_0\big),
\]
and, when $(w,\xi) = (0,0)$,
\begin{equation} \label{facts at 0}
\Om(0) \equiv 1, \quad \s(0) \equiv 0, \quad
d\Om(0)h = \mC h', \quad d(\Om \s)(0) h = h''.
\end{equation}
Since $ \mL[0] = (\mL[0]^{-1})^* = \mC$, it follows from H\ref{hyp:E=0 and convex} and \eqref{dirs} that
\begin{align}
& d_w M(0,0,\lm_1)\,h = - (E_{11} - \lm_1) \, \mC h' + g \rho h,
\label{dw M} \\
& d_\xi M(0,0,\lm_1)\, \eta = (E_{11} - \lm_1) \eta,
\label{dxi M} \\
& d_{\lm_1} M(0,0,\lm_1) = 0, \label{dc M}
\end{align}
where for convenience we have written $E_{11}$ instead of $E_{11}(1,0)$.

\begin{lemma} \label{lemma:IFT xi(w,c)}
When $\widehat \lm_1 \neq E_{11}$,
there exist a neighbourhood $\mW$ of $(0,\widehat\lm_1)$ in $H^3_0 \times (0,+\infty)$ and a map $\oxi \in C^\infty(\mW, B_r(H^1_0))$ such that
\[
M(w,\oxi(w,\lm_1),\lm_1) = 0
\]
for all $(w,\lm_1) \in \mW$, and, if $M(w,\xi,\lm_1) = 0$ with $(w,\lm_1) \in \mW$, then $\xi=\oxi(w,\lm_1)$.
Moreover, for all $\lm_1 \neq E_{11}$,
\begin{align}
& \oxi(0,\lm_1) = 0, \label{xi(0,c)=0} \\
& d_w \oxi(0,\lm_1)\,h = \,
\mC h' + \frac{g\rho}{\lm_1 - E_{11}}\, h. \label{dw xi}
\end{align}
\end{lemma}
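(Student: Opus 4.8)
The plan is to obtain the map $\oxi$ by a direct application of the implicit function theorem to the equation $M(w,\xi,\lm_1)=0$, treating $\xi$ as the dependent variable and $(w,\lm_1)$ as the independent ones, and then to read off the two stated formulas by differentiating the defining identity.

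First I would collect the ingredients already in place just above the statement: $M$ is of class $C^\infty$ on $B_r(H^3_0)\times B_r(H^1_0)\times(0,+\infty)$, one has $M(0,0,\lm_1)=0$ for every $\lm_1$, and by \eqref{dxi M} the partial derivative $d_\xi M(0,0,\widehat\lm_1)\colon H^1_0\to H^1_0$ is multiplication by the scalar $E_{11}-\widehat\lm_1$. The hypothesis $\widehat\lm_1\neq E_{11}$ makes this an isomorphism of $H^1_0$, its inverse being multiplication by $(E_{11}-\widehat\lm_1)^{-1}$. So the implicit function theorem applies at $(0,0,\widehat\lm_1)$ and produces a neighbourhood $\mW$ of $(0,\widehat\lm_1)$ in $H^3_0\times(0,+\infty)$ and a $C^\infty$ map $\oxi$ with $\oxi(0,\widehat\lm_1)=0$, $M(w,\oxi(w,\lm_1),\lm_1)=0$ on $\mW$, and local uniqueness of $\xi$ near $0$. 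I would then shrink $\mW$ so that, by continuity of $\oxi$, its values lie in $B_r(H^1_0)$ (so the composition in $M(w,\oxi(w,\lm_1),\lm_1)$ is legitimate) and so that $\lm_1\neq E_{11}$ throughout $\mW$; the latter keeps $d_\xi M(0,0,\lm_1)$ invertible at every parameter in $\mW$, which is what I need for the formula \eqref{dw xi}.

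For \eqref{xi(0,c)=0}: for each admissible $\lm_1$ the function $\xi=0$ solves $M(0,\xi,\lm_1)=0$, so by the uniqueness clause of the implicit function theorem $\oxi(0,\lm_1)=0$. For \eqref{dw xi}: differentiate the identity $M(w,\oxi(w,\lm_1),\lm_1)=0$ in $w$ at $w=0$; using the chain rule together with $\oxi(0,\lm_1)=0$ this gives
\[
d_w M(0,0,\lm_1)\,h + d_\xi M(0,0,\lm_1)\big(d_w\oxi(0,\lm_1)\,h\big)=0,
\]
and substituting \eqref{dw M} and \eqref{dxi M},
\[
-(E_{11}-\lm_1)\,\mC h' + g\rho\, h + (E_{11}-\lm_1)\,d_w\oxi(0,\lm_1)\,h = 0.
\]
Dividing by $E_{11}-\lm_1\neq 0$ yields $d_w\oxi(0,\lm_1)\,h=\mC h' - g\rho(E_{11}-\lm_1)^{-1}h$, i.e.\ \eqref{dw xi}.

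As for difficulty: there is really no substantive obstacle, since all the analytic work (smoothness of $M$ and the linearizations \eqref{dw M}--\eqref{dc M}) has been done beforehand; the only point requiring a moment's care is the bookkeeping that keeps $\oxi$ inside $B_r(H^1_0)$ and the parameter away from $E_{11}$, together with the observation that the uniqueness assertion in the lemma is precisely the local-uniqueness output of the implicit function theorem. The content of the lemma sits entirely in the algebraic fact, from H\ref{hyp:E=0 and convex} and \eqref{dxi M}, that $d_\xi M(0,0,\widehat\lm_1)$ is a nonzero multiple of the identity on $H^1_0$.
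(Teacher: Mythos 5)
Your proposal is correct and follows essentially the same route as the paper, whose proof is simply ``apply the implicit function theorem using \eqref{dxi M}; the formulae follow by the chain rule and \eqref{dw M} and \eqref{dc M}.'' Your fleshed-out version — invertibility of $d_\xi M(0,0,\widehat\lm_1)$ as multiplication by $E_{11}-\widehat\lm_1$, uniqueness giving \eqref{xi(0,c)=0}, and the chain-rule computation giving \eqref{dw xi} — is exactly what the authors intend.
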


\begin{proof} To obtain the existence of $\overline \xi$, apply the implicit function theorem using \eqref{dxi M}.
The required formulae then follow by the chain rule and \eqref{dw M} and \eqref{dc M}.
\end{proof}

Because of this, all solutions $(w,\xi,\lm)$ of \eqref{P Euler both} with $(w,\lambda_1) \in \mW$ are solutions of \eqref{P Euler w} of the form $(w,\oxi(w,\lm_1),\lm)$.
Let $\omm (w,\lm)$ denote $m_0$ (see \eqref{m0})
when $\xi = \oxi(w,\lm_1)$, and let
\[
e_i(w,\lm_1) := E_i \Big( \frac{\Om(w)}{1+\oxi(w,\lm_1)}\,,\, \frac{\Om(w)\s(w)}{1+\oxi(w,\lm_1)} \Big), \quad i = 1,2.
\]
Then on the set $\mD:= \{(w,\lm): (w,\lm_1) \in \mW,~ \lm_2 > 0 \}$, define the function
\[
F(w,\lm) := \,\mP e_2(w,\lm_1) + \mC \Big\{ \int_0^\t
\mP \Big( \omm(w,\lm) + \Om(w)e_1(w,\lm_1)- \frac{\lm_1\, \Om(w)^2}{1+\oxi(w,\lm_1)}\Big)\, dt \Big\},
\]
so that the system \eqref{P Euler both}, for $(w,\lambda_1) \in \mW$ and $\lm_1 \neq E_{11}$, becomes
\begin{equation} \label{eq:F=0}
F(w,\lm) = 0,\quad (w,\lm) \in \mD.
\end{equation}

\section{The linearized equation} \label{sec:linearized}
Recall that $F(0,\lm)=0$ for all $\lm$ with $\lm_1 \neq E_{11}$ and that $F \in C^\infty(\mD,\,H^1_0)$. We now calculate its partial derivatives.
When $w=0$, \eqref{facts at 0} holds, $\oxi=0$ by \eqref{xi(0,c)=0}
and, as a consequence, $\mP(\gr I_0)=0$.
Hence, by \eqref{dw xi},
\[
d_w \omm(0,\lm)\,h = -(\lm_2 + g\rho)\,h +
\Big( \frac{(g\rho)^2}{E_{11}-\lm_1}\, - g \Big) \, \mC \Big( \int_0^\t h(t)\,dt \Big),
\]
and, by H2,
\begin{align} \label{dF}
d_w F(0,\lm)\,h = & {}
\ E_{22} h''
+ \lm_1\, h
- \lm_2 \, \mC \Big( \int_0^\t h(t)\,dt \Big)
\\
& + \Big( g + \frac{(g\rho)^2}{\lm_1-E_{11}} \Big)\,
\mP \Big( \int_0^\t \mP \Big(\int_0^t h(s)\,ds \Big) \, dt \Big),
\notag
\end{align}
where, as above, $E_{ii} = E_{ii}(1,0)$, $i=1,\,2$.

Now suppose that $h \in H^3_0\setminus \{0\}$ and that $d_w F(0,\lm)h=0$.
Then $h \in H^5_0$, and differentiating
the equality $d_w F(0,\lm)\,h=0$ twice with respect to $\t$ yields
\begin{equation} \label{pinotto}
E_{22} h'''' + \lm_1 h'' - \lm_2 \,\mC h'
+ \Big( g + \frac{(g\rho)^2}{\lm_1-E_{11}} \Big) \, h = 0.
\end{equation}
It is easy to see that \eqref{pinotto} has a non-constant even solution $h$ if and only if
\begin{equation} \label{integer eq}
E_{22} k^4 - \lm_1 k^2 - \lm_2 k + g
+ \frac{(g\rho)^2}{\lm_1-E_{11}}\, = 0,
\end{equation}
for some positive integer $k$.
Since $E_{22}(1,0)$ is assumed to be positive, for every fixed $\lm_1,\lm_2 > 0$, \eqref{integer eq} possesses at most two positive integer solutions. (This follows by noting that the graph of the quartic curve $x \mapsto E_{22} x^4 - \lm_1 x^2$ on the half-plane $\{ x \geq 1\}$ intersects any straight line with slope $\lm_2$ at most twice.)

\subsection{Non-trivial kernel} \label{ntk}
Let $g,\rho, E_{11}, E_{22}>0$ be fixed.
Then $\lm_1,\lm_2>0$, $\lm_1 - E_{11}\neq 0$ and \eqref{integer eq} holds for some integer $k \geq 1$ if and only if
\begin{equation} \label{pol geq 0}
0 < \lm_2 k\, =
E_{22} k^4 - \lm_1 k^2 + g + \frac{(g\rho)^2}{\lm_1 - E_{11}}\,
= \frac{p_k(\lm_1)}{E_{11} - \lm_1}
\quad \text{and} \quad \lm_1 > 0,
\end{equation}
where
\[
p_k(X) := k^2 X^2 - \big(E_{22}k^4 + E_{11} k^2 + g\big) \, X
+ E_{11} \big(E_{22} k^4 + g\big) - (g\rho)^2.
\]
The discriminant of $p_k$ is
\[
\D(p_k) = \big(E_{11} k^2 - E_{22} k^4 - g\big)^2
+ \big(2g\rho k\big)^2 \,> 0,
\]
and, since $p_k(E_{11})= -(g\rho)^2$, its roots
\[
X_k^{\pm} := \frac{E_{11} k^2 + E_{22} k^4 + g \pm \sqrt{\D(p_k)}}{2k^2}
\]
satisfy
\[
X_k^- < E_{11} < X_k^+ .
\]
Moreover $X_k^- > 0$ if and only if $E_{11} (E_{22} k^4 + g) > g^2 \rho^2$.
It follows that \eqref{pol geq 0} holds if and only if
\[
\lm_1 \in (0,X_k^-) \cup (E_{11},X_k^+), \quad
\lm_2 = f_k(\lm_1),
\]
where $(0,X_k^-)$ is meant to be empty if $X_k^- \leq 0$, and
\[
f_k(\lm_1) := E_{22} \,k^3 - \lm_1 k
+ \frac{1}{k} \Big( g + \frac{(g\rho)^2}{\lm_1-E_{11}}\Big).
\]
$X_k^- \to E_{11}$ and $X_k^+ \to +\infty$ as $k \to +\infty$.
Thus, for every $\lm_1 \neq E_{11}$, there exists an integer $\bar k = \bar k(\lm_1)$ such that $\lm_1 \in (0,X_k^-) \cup (E_{11},X_k^+)$
for all $k \geq \bar k$.
Therefore, for every $k \geq \bar k$,
\eqref{integer eq} holds with $\lm_2 = f_k(\lm_1)$, and we have proved the following lemma.

\begin{lemma} \label{lemma:exists one}
Let $g,\rho,E_{11}$ and $E_{22}$ be fixed, positive constants.
For every fixed $\lm_1 \neq E_{11}$, the parameters $\lm_2$ for which the linearized operator $d_w F(0,\lm_1,\lm_2)$ has a non-trivial kernel
form a sequence $\{\lm_2^{(k)} = f_k(\lm_1): k \geq \bar k(\lm_1)\}$, with
\[
\lm_2^{(k)} = f_k(\lm_1) \to +\infty \quad \text{as} \ k \to \infty.
\]
\end{lemma}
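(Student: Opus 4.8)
The plan is to assemble the facts already established in Sections~\ref{sec:linearized} and~\ref{ntk}; the one step needing an independent word is the equivalence between non-triviality of $\Ker d_w F(0,\lm)$ and the scalar relation \eqref{integer eq}, so I begin there. The operator $d_w F(0,\lm)$ in \eqref{dF} maps $H^3_0$ into $H^1_0$, so its values are $2\p$-periodic with zero mean; hence for $h\in H^3_0$ one has $d_w F(0,\lm)h=0$ if and only if $(d_w F(0,\lm)h)''=0$, because a $2\p$-periodic function with vanishing second derivative is constant and a zero-mean constant is $0$. A solution of $d_w F(0,\lm)h=0$ bootstraps to $H^5_0$ through \eqref{dF}, and the twice-differentiated identity is exactly \eqref{pinotto}. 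Now $\cos k\t$ is an eigenfunction of the linear operator in \eqref{pinotto} (using $\mC\cos k\t=\sin k\t$), and it satisfies \eqref{pinotto} precisely when \eqref{integer eq} holds; since every even, zero-mean, $2\p$-periodic function is a cosine series $\sum_{k\ge1}a_k\cos k\t$ whose active modes must each solve the linear equation \eqref{pinotto}, it follows that $\Ker d_w F(0,\lm_1,\lm_2)\neq\{0\}$ if and only if \eqref{integer eq} holds for at least one positive integer $k$, and then solving \eqref{integer eq} for $\lm_2$ gives $\lm_2=f_k(\lm_1)$.

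Next, fix $\lm_1>0$ with $\lm_1\neq E_{11}$ and decide for which $k$ the relation \eqref{integer eq} holds with $\lm_2=f_k(\lm_1)$ (which is then automatically positive). Multiplying \eqref{integer eq} by the nonzero number $\lm_1-E_{11}$ and imposing $\lm_2 k>0$ recasts the condition as \eqref{pol geq 0}, namely $p_k(\lm_1)/(E_{11}-\lm_1)>0$ together with $\lm_1>0$, where $p_k$ is the upward parabola displayed there. Its discriminant $\D(p_k)=(E_{11}k^2-E_{22}k^4-g)^2+(2g\rho k)^2$ is strictly positive, so $p_k$ has two real roots $X_k^\pm$; and since $p_k(E_{11})=-(g\rho)^2<0$, the value $E_{11}$ lies strictly between them, $X_k^-<E_{11}<X_k^+$. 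As $p_k$ is positive outside $[X_k^-,X_k^+]$ and negative inside, while $E_{11}-\lm_1$ changes sign at $E_{11}$, a sign check on the four intervals determined by $0$, $X_k^-$, $E_{11}$, $X_k^+$ shows that $p_k(\lm_1)/(E_{11}-\lm_1)>0$ exactly when $\lm_1\in(0,X_k^-)\cup(E_{11},X_k^+)$ (the first piece being empty if $X_k^-\le0$). So, for this $\lm_1$, one has $\Ker d_w F(0,\lm_1,f_k(\lm_1))\neq\{0\}$ precisely when $\lm_1\in(0,X_k^-)\cup(E_{11},X_k^+)$.

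It remains to single out the relevant $k$ and to examine $f_k(\lm_1)$ for large $k$. By the asymptotics $X_k^-\to E_{11}$ and $X_k^+\to+\infty$ recorded in Section~\ref{ntk}: if $\lm_1>E_{11}$ then $\lm_1<X_k^+$ for all large $k$, so $\lm_1\in(E_{11},X_k^+)$; and if $0<\lm_1<E_{11}$ then, since $X_k^-\to E_{11}>\lm_1$, we have $\lm_1<X_k^-$ for all large $k$, so $\lm_1\in(0,X_k^-)$. Either way there is $\bar k(\lm_1)\in\N$ with $\lm_1\in(0,X_k^-)\cup(E_{11},X_k^+)$ for all $k\ge\bar k(\lm_1)$, so by the previous paragraph the linearized operator has non-trivial kernel for $\lm_2^{(k)}=f_k(\lm_1)$, $k\ge\bar k(\lm_1)$. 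Finally, from
\[
f_k(\lm_1)=E_{22}k^3-\lm_1 k+\frac1k\Big(g+\frac{(g\rho)^2}{\lm_1-E_{11}}\Big)
\]
and $E_{22}>0$, the cubic term dominates the rest, so $f_k(\lm_1)\to+\infty$; in particular the values $\lm_2^{(k)}$ are eventually strictly increasing and thus form a genuine sequence, which establishes the lemma.

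The main obstacle — modest though it is — is the first step: confirming that the kernel of $d_w F(0,\lm)$, an operator assembled from the Hilbert transform composed with antiderivatives and the projection $\mP$, is captured \emph{exactly} by the integer relation \eqref{integer eq}, and in particular that differentiating $d_w F(0,\lm)h=0$ twice to obtain \eqref{pinotto} discards nothing (which relies on the periodicity and zero-mean built into $H^1_0$). Everything beyond that is elementary algebra with the one-parameter family of quadratics $p_k$ — the sign of its discriminant, the location of $E_{11}$ relative to the roots, and the behaviour of $X_k^\pm$ and of $f_k$ as $k\to\infty$ — all of which is already spelled out in Section~\ref{ntk}.
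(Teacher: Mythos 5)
Your proposal is correct and follows essentially the same route as the paper, whose ``proof'' is the discussion in Section \ref{sec:linearized} and Subsection \ref{ntk}: reduce the kernel condition to the scalar relation \eqref{integer eq} via \eqref{pinotto} and the diagonality of $d_w F(0,\lm)$ on the cosine basis, recast it as the sign condition \eqref{pol geq 0} for the quadratic $p_k$, locate $E_{11}$ between the roots $X_k^\pm$, and use $X_k^-\to E_{11}$, $X_k^+\to+\infty$ to produce $\bar k(\lm_1)$ and the divergence of $f_k(\lm_1)$. The only step you add beyond the paper's text is the (correct) justification that differentiating $d_w F(0,\lm)h=0$ twice loses no information, which the paper leaves implicit.
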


Thus, for every $g, \rho, E_{11}, E_{22}>0$ there exists a set $\mA$ formed by infinitely many curves $\mA_k$ in the parameter quadrant $\{ (\lm_1, \lm_2) : \lm_1 > 0$, $\lm_2>0\}$,
\[
\mA = \bigcup_{k \in \N} \mA_k,
\quad
\mA_k = \big\{ (\lm_1, \lm_2) : \lm_2 = f_k(\lm_1) \big\}
\cap \big\{\lm_1 >0, \ \lm_2 >0\big\}
\]
(see figure \ref{fig 2}), such that the kernel of the linearized operator $d_w F(0,\lm)$ is nontrivial if and only if $\lm \in \mA$.
Note that there is no restriction on $\lm_1$ except that $\lm_1\neq E_{11}$.

\begin{figure}[p]
\begin{center}
\includegraphics
{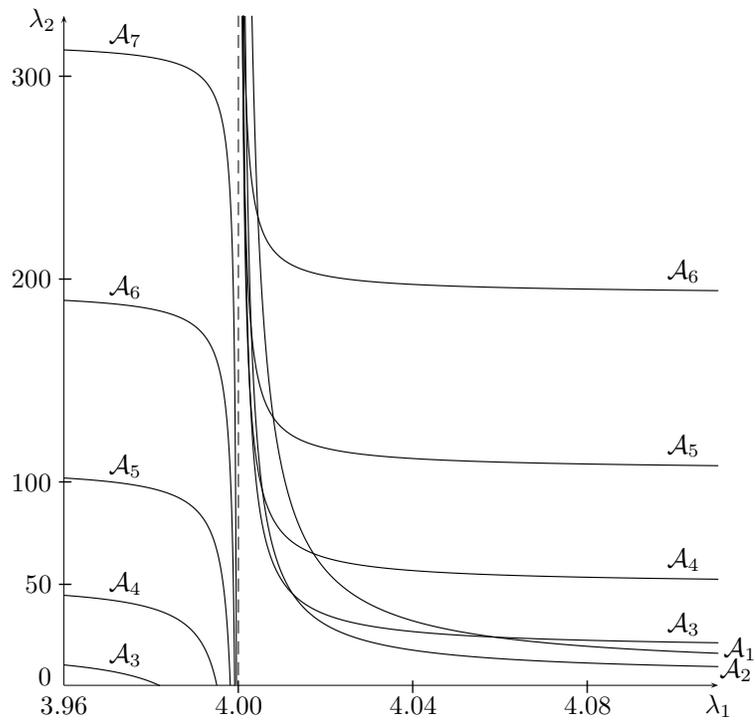}

\vspace{12mm}

\quad \includegraphics[scale=0.93]
{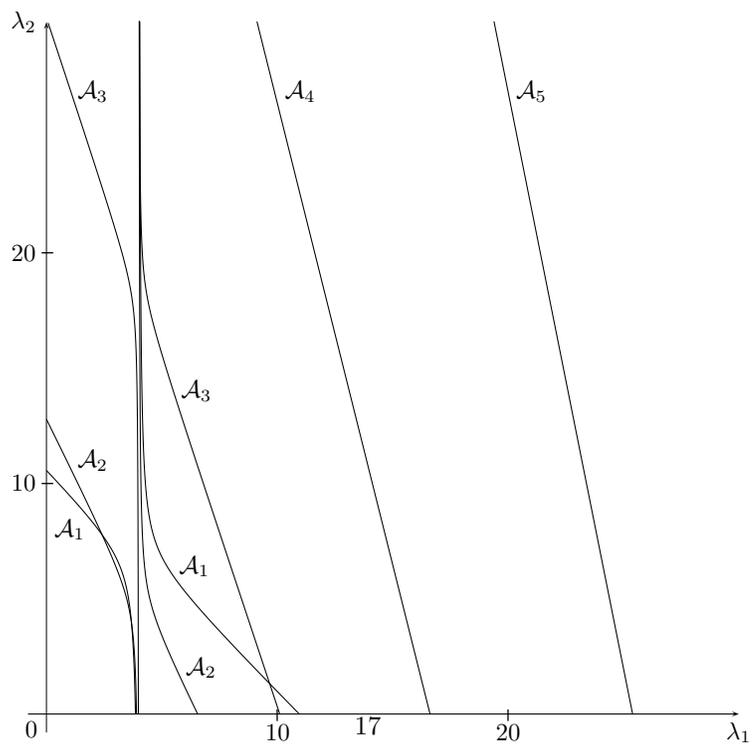}
\end{center}
\caption{Plots of the curves $\mA_k$, $k=1,\ldots,7$,
when $g=9.81$, $g\rho=1$, $E_{11}=4$ and $E_{22}=1$,
first in the region $3.96 < \lambda_1 < 4.10$ and $0 < \lambda_2 < 330$,
with two different scales for the two axes, and
then in the region $0 < \lambda_1, \lambda_2 < 30$, with the same scale
for $\lambda_1$ and $\lambda_2$.}
\label{fig 2}
\end{figure}

\subsection{Double eigenvalues}
The kernel of $d_w F(0,\lm)$ is two-dimensional if and only if
\eqref{integer eq} has two positive integer solutions $k \neq l$,
namely the curves $\mA_k$ and $\mA_l$ cross at $\lm$.
Now, $k \neq l$ solve \eqref{integer eq} if and only if
\begin{equation} \label{kl inter}
\lm_2 = h_{k,l}(\lm_1) := (k+l) \big(E_{22}(k^2+l^2) - \lm_1\big)
\quad \text{and} \quad
q_{k,l}(\lm_1)=0,
\end{equation}
where
\begin{align*}
q_{k,l}(X) := {} & kl X^2
- \big( E_{11} kl + E_{22} kl (k^2 + kl + l^2) - g \big) X \\
& + E_{11} E_{22} kl (k^2 + kl + l^2) - E_{11} g + (g\rho)^2 . \notag
\end{align*}
For all $kl$ sufficiently large,
the discriminant of $q_{k,l}$,
\[
\D(q_{k,l}) = \big( E_{11} kl - E_{22} kl (k^2 + kl + l^2) + g \big)^2
- 4 kl (g\rho)^2,
\]
is positive, and the roots $X_{k,l}^-$ and $X_{k,l}^+$ of $q_{k,l}$ are both greater than $E_{11}$.

Since $h_{k,l}(X_{k,l}^+) < 0$ for all $kl$ sufficiently large,
there are at most finitely many solutions $\lm$ of \eqref{kl inter} with $\lm_1 = X_{k,l}^+$ and $\lm_2>0$.
On the other hand, $h_{k,l}(X_{k,l}^-) > 0$ for all $kl$ sufficiently large, and
\[
X_{k,l}^- \to E_{11}, \quad
h_{k,l}(X_{k,l}^-) \to +\infty
\]
as $kl \to +\infty$.
Thus, we have proved the following lemma.

\begin{lemma} \label{lemma:kl}
Let $g,\rho,E_{11}$ and $E_{22}$ be fixed, positive constants.
The parameters $\lm$ for which the linearized operator $d_w F(0,\lm)$ has a two-dimensional kernel form a sequence $\lm^{(n)} = (\lm^{(n)}_1, \lm^{(n)}_2)$, with
\[
\lm^{(n)}_1 \searrow E_{11}, \quad
\lm^{(n)}_2 \to + \infty \quad \text{as}\ n \to \infty.
\]
\end{lemma}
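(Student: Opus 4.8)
The plan is to read the statement off the reduction already carried out. By \eqref{kl inter}, a parameter $\lm$ with $\lm_1\neq E_{11}$ and $\lm_1,\lm_2>0$ makes $d_wF(0,\lm)$ have a two-dimensional kernel precisely when, for some unordered pair $\{k,l\}$ of distinct positive integers, $\lm_1$ is a root of $q_{k,l}$ and $\lm_2=h_{k,l}(\lm_1)>0$; write $D$ for the set of such $\lm$. First I would record a finiteness-of-fibres fact: for a fixed $\lm$ with $\lm_1,\lm_2>0$, equation \eqref{integer eq} has at most two positive integer solutions (as noted just below \eqref{integer eq}), so any given $\lm\in D$ arises from at most one unordered pair $\{k,l\}$; equivalently, distinct pairs produce distinct points of $D$. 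A one-line substitution also gives $q_{k,l}(E_{11})=(g\rho)^2>0$ for every $k,l$, so $E_{11}$ is never a root of $q_{k,l}$ and the side condition $\lm_1\neq E_{11}$ never has to be imposed by hand.

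Next I would feed in the estimates established immediately above the statement. There is a threshold $N_0$ such that, for every unordered pair $\{k,l\}$ of distinct positive integers with $kl\ge N_0$: $\D(q_{k,l})>0$; both roots $X_{k,l}^\pm$ of $q_{k,l}$ exceed $E_{11}$; $h_{k,l}(X_{k,l}^+)<0$ and $h_{k,l}(X_{k,l}^-)>0$; and moreover $X_{k,l}^-\to E_{11}$, $h_{k,l}(X_{k,l}^-)\to+\infty$ as $kl\to+\infty$. Hence, for $kl\ge N_0$, the root $X_{k,l}^+$ is inadmissible because it forces $\lm_2=h_{k,l}(X_{k,l}^+)<0$, while $X_{k,l}^-$ contributes exactly the one point $(X_{k,l}^-,h_{k,l}(X_{k,l}^-))\in D$, whose first coordinate exceeds $E_{11}$ and whose second coordinate is positive. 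Only finitely many pairs satisfy $kl<N_0$, so they add only finitely many further points to $D$; and by the finiteness-of-fibres fact the points coming from pairs with $kl\ge N_0$ are pairwise distinct, so $D$ is countably infinite. For each $\delta>0$ the set $\{\lm\in D:\lm_1\ge E_{11}+\delta\}$ is finite, since $X_{k,l}^-\ge E_{11}+\delta$ for only finitely many pairs (as $X_{k,l}^-\to E_{11}$) and only finitely many points come from $kl<N_0$; similarly, for each $N$ the set $\{\lm\in D:\lm_2\le N\}$ is finite, since $h_{k,l}(X_{k,l}^-)\le N$ for only finitely many pairs (as $h_{k,l}(X_{k,l}^-)\to+\infty$). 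Enumerating the finitely many points of $D$ with $\lm_1\le E_{11}$ first, and then the remaining points --- which all have $\lm_1>E_{11}$ and, by the $\delta$-finiteness, accumulate only at $E_{11}$, hence can be listed in strictly decreasing order of first coordinate --- produces a sequence $\lm^{(n)}$ with $\lm_1^{(n)}\to E_{11}$ (strictly decreasing past finitely many terms) and, by the $N$-finiteness, $\lm_2^{(n)}\to+\infty$. That is the assertion.

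I do not expect a genuine obstacle: all the analytic substance --- positivity of $\D(q_{k,l})$, the location of its roots relative to $E_{11}$, the signs of $h_{k,l}$ at those roots, and the two limits as $kl\to\infty$ --- has already been established before the statement. The only care needed is in the bookkeeping: showing that the assignment $\{k,l\}\mapsto\lm$ has finite (indeed singleton) fibres, so that $D$ is genuinely infinite and has no accumulation point in the open parameter quadrant away from the line $\lm_1=E_{11}$; checking that, for $kl$ large, exactly one of the two roots of $q_{k,l}$ survives the sign constraint $\lm_2>0$; and confirming that the finitely many pairs with $kl<N_0$, together with the $X^+$-branch, cannot disturb the limiting behaviour $\lm_1\to E_{11}^+$, $\lm_2\to+\infty$.
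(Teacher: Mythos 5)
Your proposal is correct and follows essentially the same route as the paper, whose proof of Lemma \ref{lemma:kl} is precisely the discussion preceding it (the reduction to \eqref{kl inter}, the sign of $\D(q_{k,l})$, the location of the roots $X_{k,l}^{\pm}$ relative to $E_{11}$, the signs of $h_{k,l}$ at those roots, and the limits as $kl\to\infty$). You merely make explicit the bookkeeping the paper leaves implicit --- the injectivity of $\{k,l\}\mapsto\lm$, the observation $q_{k,l}(E_{11})=(g\rho)^2>0$, and the finiteness of sublevel sets needed to enumerate the points as a sequence with the stated limits --- all of which is sound.
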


\begin{remark*}
Double eigenvalues with $\lm_1 < E_{11}$ are possible, provided we assume some additional hypotheses on $E$, namely
\[
E_{22} (kl)^2 < g < E_{22} kl (k^2 + kl + l^2)
\]
and $E_{11}$ sufficiently large.
In any case, they are at most finitely many.
\qed \end{remark*}

\section{Lyapunov-Schmidt reduction}We turn now to study the bifurcation of solutions of \eqref{eq:F=0}.
 Recall that throughout we are dealing with $2\pi$-periodic functions $w$ of zero mean.
Suppose that $\lm^* = (\lm_1^*, \lm_2^*) \in \mA$, $\lm_1^* \neq E_{11}$.
Then the kernel,
\[
V := \Ker d_wF(0,\lm^*) \subset H^3_0,
\]
of the linearized operator,
is a subspace of dimension 1 or 2, depending on the number of integer solutions of \eqref{integer eq}, and the range
\[
R := \Range d_wF(0,\lm^*) \subset H^1_0
\]
is orthogonal to $V$ with respect to the $L^2(0,2\p)$ scalar product, namely
\[
H^1_0 = V \oplus R, \quad
H^3_0 = V \oplus (R \cap H^3_0).
\]
In fact, it is evident from \eqref{dF} that $d_w F(0,\lm)$ is a diagonal operator with respect to the basis of even $2\p$-periodic functions $\{ \cos (j\t) \colon j=1,2,\ldots \}$, for all $\lm$.

Following the classical Lyapunov-Schmidt decomposition, we write
\[
w = v + y, \quad v \in V, \quad y \in R \cap H^3_0,
\]
and denote $\Pi_V, \Pi_R$ the projection onto $V$ and $R$ respectively.
The equation $F(w,\lm)=0$ is then equivalent to the system
\begin{align} \label{Lyap-Schmidt}
\begin{cases}
\Pi_V \,F(v+y, \lm) = 0 & \text{(bifurcation equation)},\\
\Pi_R \,F(v+y, \lm) = 0 & \text{(auxiliary equation)}.
\end{cases}
\end{align}

\begin{lemma}[Auxiliary equation] \label{lemma:aux}
There is a neighbourhood $\mU$ of $(0,\lm^*)$ in $V \times \R^2$, a neighbourhood $U$ of $0$ in $R \cap H^3_0$ and a function $\oy \in C^\infty(\mU, U)$ such that
\[\Pi_R \,F(v + \oy(v,\lm), \lm) = 0
\]
for all $(v,\lm) \in \mU$, and, if $\Pi_R \,F(v + y,\lm) = 0$ with $y \in U$ and $(v,\lm) \in \mU$, then $y = \oy(v,\lm)$.
Moreover, for all $(0,\lm) \in \mU$,
\begin{equation}  \label{oy}
\oy(0,\lm)=0, \quad d_v \oy(0,\lm)=0, \quad
d_{\lm_i} \oy(0,\lm) = 0, \quad i=1,2,
\end{equation}
and there exists a constant $C>0$ such that
\[
\| \oy(v,\lm) \|_{H^3} \leq C \| v \|_{H^3}^2
\]
for all $(v,\lm) \in \mU$.
\end{lemma}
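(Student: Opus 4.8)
The plan is to apply the implicit function theorem to the auxiliary equation $\Pi_R F(v+y,\lm)=0$, viewed as an equation for $y$ near the known solution $(v,y,\lm)=(0,0,\lm^*)$. First I would observe that, since $F(0,\lm)=0$ for all $\lm$ with $\lm_1\neq E_{11}$, the point $(0,0,\lm^*)$ satisfies $\Pi_R F(0+0,\lm^*)=0$. The crucial derivative is $d_y\big(\Pi_R F\big)(0,0,\lm^*)$, which is simply $\Pi_R \circ d_w F(0,\lm^*)\big|_{R\cap H^3_0}$. By the definitions of $V$ and $R$, the operator $d_w F(0,\lm^*)$ maps $R\cap H^3_0$ bijectively onto $R$; indeed, \eqref{dF} shows that $d_w F(0,\lm)$ is diagonal in the basis $\{\cos(j\t)\}$, so on the complement of the kernel it acts as multiplication by the (nonzero) eigenvalues $E_{22}j^4-\lm_1j^2-\lm_2j+g+(g\rho)^2/(\lm_1-E_{11})$, $j\notin\{k,l\}$. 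One must check this restricted operator is an isomorphism of Banach spaces $R\cap H^3_0\to R$ (the diagonal structure and the $j^4$ growth of the eigenvalues give the needed two-sided bound, i.e. the inverse is bounded from $H^1$ to $H^3$). Since $F\in C^\infty(\mD,H^1_0)$, the implicit function theorem then yields the neighbourhoods $\mU$, $U$ and the $C^\infty$ map $\oy$ with the stated uniqueness property, after shrinking $\mU$ so that $(0,\lm)\in\mU$ forces $\lm_1\neq E_{11}$ (so $F$ is defined and smooth there).

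Next I would extract the derivative formulae in \eqref{oy}. Differentiating the identity $\Pi_R F(v+\oy(v,\lm),\lm)\equiv 0$ and using $F(0,\lm)\equiv 0$ (hence $d_v$ and $d_{\lm_i}$ of $F$ evaluated along $v=0$, $y=0$ are controlled): setting $v=0$ gives $\oy(0,\lm)=0$ because $y=0$ is a solution of the auxiliary equation there and uniqueness applies. Differentiating in $v$ at $v=0$ gives $\Pi_R\, d_w F(0,\lm)\big(h+d_v\oy(0,\lm)h\big)=0$ for $h\in V$; since $d_w F(0,\lm^*)h=0$ for $h\in V$ — and more generally $\Pi_R d_wF(0,\lm)h = 0$ for $h \in V$ by the diagonal structure — and the restriction to $R\cap H^3_0$ is injective, we get $d_v\oy(0,\lm)=0$. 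Differentiating in $\lm_i$ at $v=0$: because $F(0,\lm)\equiv 0$ we have $d_{\lm_i}F(0,\lm)=0$, so $\Pi_R\, d_w F(0,\lm)\,d_{\lm_i}\oy(0,\lm)=0$, whence $d_{\lm_i}\oy(0,\lm)=0$ by the same injectivity.

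Finally, the quadratic estimate $\|\oy(v,\lm)\|_{H^3}\le C\|v\|_{H^3}^2$ follows from Taylor's theorem with the first-order terms vanishing: since $\oy(0,\lm)=0$ and $d_v\oy(0,\lm)=0$, and $\oy$ is $C^2$ (indeed $C^\infty$) with derivatives bounded uniformly on the compact closure of a slightly smaller neighbourhood, the second-order remainder bound gives the claim after shrinking $\mU$. The dependence on $\lm$ is harmless because $\lm$ ranges over a bounded set.

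The main obstacle is the functional-analytic bookkeeping in the first step: one must be careful that $d_w F(0,\lm^*)$ restricted to $R\cap H^3_0$ is genuinely an isomorphism onto $R$ with the right function spaces — in particular that the nonlocal terms in \eqref{dF} (the Hilbert transform composed with antiderivatives, and the double-antiderivative term) do not spoil the diagonalization or the boundedness of the inverse. Granting the diagonal structure stated after \eqref{Lyap-Schmidt}, this reduces to a scalar estimate on the eigenvalues $E_{22}j^4 - \lm_1 j^2 - \lm_2 j + g + (g\rho)^2/(\lm_1-E_{11})$ for $j$ outside the finite set of integer roots of \eqref{integer eq}, showing they are bounded away from $0$ and grow like $j^4$, uniformly for $\lm$ near $\lm^*$; this is where the hypothesis $\lm_1^*\neq E_{11}$ and the structure of \eqref{integer eq} are used. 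Everything after that is a routine application of the implicit function theorem and Taylor expansion.
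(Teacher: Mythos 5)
Your proposal is correct and follows essentially the same route as the paper, whose entire proof is ``apply the implicit function theorem, noting that $d_wF(0,\lm)$ is diagonal in the basis $\{\cos(j\t)\}$ so that $\Pi_R\,d_wF(0,\lm)v=0$ for all $v\in V$ and all $\lm$''; you have simply written out the details (invertibility of the restriction to $R\cap H^3_0$, the derivative identities, and the Taylor estimate). One small bookkeeping slip: the eigenvalue of $d_wF(0,\lm)$ on $\cos(j\t)$ is $-j^{-2}\big(E_{22}j^4-\lm_1j^2-\lm_2j+g+(g\rho)^2/(\lm_1-E_{11})\big)$, i.e.\ it grows like $j^2$ rather than $j^4$ — which is exactly the rate needed for the inverse to be bounded from $H^1_0$ to $H^3_0$, so the argument is unaffected.
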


\begin{proof}
Apply the implicit function theorem, and note that, for all $\lm$,
$d_w F(0,\lm)$ is diagonal in the basis $\{ \cos j\t \}$, $j=1,2,\ldots$,
therefore $\Pi_R \, d_w F(0,\lm)v = 0$ for all $v \in V$, for all $\lm$.
\end{proof}

In this way, the bifurcation problem for the equation \eqref{eq:F=0} has been reduced to
\begin{equation} \label{bif eq}
\Pi_V \,F(v + \oy(v,\lm), \lm) = 0,
\end{equation}
with $(v,\lm) \in \mU \subset V \times \R^2$.

\section{Sheets bifurcating from a simple eigenvalue} \label{simple}

Here we study the elementary case in which equation \eqref{integer eq}, and hence the linearized operator \eqref{dF}, has a 1-dimensional kernel.
From the discussions in Section \ref{sec:linearized} this is the case for parameter values $(\lm_1^*,\lm_2^*)$ on the union of countably many curves with countably many points removed.

Hence suppose that there exists a unique integer $k \geq 1$ that satisfies \eqref{integer eq} for $\lm^* = (\lm^*_1, \lm^*_2)$.
Then
\[
V := \Ker d_wF(0,\lm^*) = \big\{ t \cos (k \t) \colon t \in \R \big\}
\]
and, by \eqref{bif eq}, the system \eqref{Lyap-Schmidt} is equivalent to the problem
\begin{equation} \label{biff}
\Phi(t,\lm) :=
\Pi_V \,F \big( t \cos(k\t) + \oy(t \cos(k\t),\lm),\, \lm \big) = 0.
\end{equation}
$\Phi$ is a smooth real valued map of three real variables $(t,\lm_1,\lm_2)$, defined on a neighbourhood of $(0,\lm_1^*,\lm_2^*)$.
Since $F(0,\lm)=0$ for all $\lm$, from Lemma \ref{lemma:aux} it follows that
\[
\Phi(0,\lm)=0 \quad \text{for all } (0,\lm) \in \mU.
\]
Also, using \eqref{oy} and the orthogonality of $V$ and $R$,
\begin{equation} \label{da Phi=0}
\partial_t \Phi(0,\lm^*) = 0.
\end{equation}
To find solutions of \eqref{biff} with $t \neq 0$ we invoke the implicit function theorem in the usual analytic approach to bifurcation problems.

\begin{theorem} \label{thm:simple bif}
Suppose that there exists a unique integer $k \geq 1$ that satisfies \eqref{integer eq} for $\lm^* = (\lm^*_1, \lm^*_2)$, with $\lm_1^* \neq E_{11}$.
Then there exist neighbourhoods $\mU_1$ of $(0,\lm_1^*)$ in $\R^2$ and $U_1$ of $\lm_2^*$ in $\R$,
and a map $\olm_2 \in C^\infty(\mU_1, U_1)$,
with $\olm_2(0,\lm_1^*)=\lm_2^*$, such that
\[
\Phi(t,\lm_1,\olm_2(t,\lm_1)) = 0 \quad
\text{for all } (t,\lm_1) \in \mU_1,
\]
and, if $\Phi(t,\lm_1,\lm_2)=0$, with $(t,\lm_1) \in \mU_1$, $t \neq 0$ and $\lm_2 \in U_1$,
then $\lm_2 = \olm_2(t,\lm_1)$.
As a consequence,
\[
F \big( w(t,\lm_1), \lm_1, \olm_2(t,\lm_1) \big)=0,
\]
where
\[
w(t,\lm_1) := t \cos(k\t) + \oy \big( t\cos(k\t), \lm_1, \olm_2(t,\lm_1)\big) = t \cos(k\t) + O(t^2).
\]
\end{theorem}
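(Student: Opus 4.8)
The plan is to treat \eqref{biff} as a bifurcation-from-a-simple-eigenvalue problem in the spirit of Crandall--Rabinowitz, solving for $\lm_2$ (rather than for the amplitude $t$) by the implicit function theorem. Since $\Phi(0,\lm)=0$ for all $(0,\lm)\in\mU$, Taylor's formula with integral remainder gives the factorization
\[
\Phi(t,\lm) = t\,\widetilde\Phi(t,\lm),
\qquad
\widetilde\Phi(t,\lm) := \int_0^1 \partial_t\Phi(st,\lm)\,ds,
\]
where $\widetilde\Phi\in C^\infty$ near $(0,\lm^*)$ (differentiation under the integral sign) and $\widetilde\Phi(0,\lm)=\partial_t\Phi(0,\lm)$. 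For $t\neq 0$ the equations $\Phi(t,\lm)=0$ and $\widetilde\Phi(t,\lm)=0$ are equivalent, so it suffices to solve $\widetilde\Phi(t,\lm_1,\lm_2)=0$ for $\lm_2$ near $(0,\lm_1^*,\lm_2^*)$.

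The key step is to identify $\partial_t\Phi(0,\lm)$ and to verify the transversality (eigenvalue-crossing) condition. Using $\oy(0,\lm)=0$ and $d_v\oy(0,\lm)=0$ from Lemma \ref{lemma:aux}, differentiating \eqref{biff} in $t$ at $t=0$ gives $\partial_t\Phi(0,\lm)=\Pi_V\,d_wF(0,\lm)[\cos(k\t)]$; by \eqref{dF}, and using $\mC\cos(k\t)=\sin(k\t)$, the right-hand side equals $\mu_k(\lm)\cos(k\t)$, where
\[
\mu_k(\lm) = -E_{22}k^2 + \lm_1 + \frac{\lm_2}{k}
- \frac{1}{k^2}\Big( g + \frac{(g\rho)^2}{\lm_1-E_{11}} \Big).
\]
Thus, with the identification of $V$ with $\R$ used to regard $\Phi$ as real-valued, $\partial_t\Phi(0,\lm)=\mu_k(\lm)$. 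By \eqref{integer eq} we have $\mu_k(\lm^*)=0$ (consistent with \eqref{da Phi=0}), hence $\widetilde\Phi(0,\lm^*)=0$; and $\partial_{\lm_2}\mu_k\equiv 1/k\neq 0$, so $\partial_{\lm_2}\widetilde\Phi(0,\lm^*)\neq 0$. This transversality is immediate here because $d_wF(0,\lm)$ is diagonal in the basis $\{\cos j\t\}$ and affine in $\lm_2$; it is the only place where the concrete structure of the problem enters, and I regard it as the main --- though mild --- obstacle.

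Now apply the implicit function theorem to $\widetilde\Phi$ at $(0,\lm_1^*,\lm_2^*)$ in the unknown $\lm_2$: since $\partial_{\lm_2}\widetilde\Phi(0,\lm^*)\neq 0$, there are neighbourhoods $\mU_1$ of $(0,\lm_1^*)$ in $\R^2$ and $U_1$ of $\lm_2^*$ in $\R$, and a map $\olm_2\in C^\infty(\mU_1,U_1)$ with $\olm_2(0,\lm_1^*)=\lm_2^*$, such that $\widetilde\Phi(t,\lm_1,\olm_2(t,\lm_1))=0$ on $\mU_1$ and such that $\widetilde\Phi(t,\lm_1,\lm_2)=0$ with $(t,\lm_1)\in\mU_1$, $\lm_2\in U_1$ forces $\lm_2=\olm_2(t,\lm_1)$. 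Consequently $\Phi(t,\lm_1,\olm_2(t,\lm_1))=t\,\widetilde\Phi(t,\lm_1,\olm_2(t,\lm_1))=0$, and if $\Phi(t,\lm_1,\lm_2)=0$ with $t\neq 0$, $(t,\lm_1)\in\mU_1$, $\lm_2\in U_1$, then $\widetilde\Phi(t,\lm_1,\lm_2)=0$ and hence $\lm_2=\olm_2(t,\lm_1)$. Finally, unwinding the Lyapunov--Schmidt decomposition --- the reduced equation \eqref{bif eq} together with the solved auxiliary equation of Lemma \ref{lemma:aux} being equivalent to $F(w,\lm)=0$ --- yields $F\big(w(t,\lm_1),\lm_1,\olm_2(t,\lm_1)\big)=0$ with $w(t,\lm_1)=t\cos(k\t)+\oy\big(t\cos(k\t),\lm_1,\olm_2(t,\lm_1)\big)$, and the bound $\|\oy(v,\lm)\|_{H^3}\leq C\|v\|_{H^3}^2$ from Lemma \ref{lemma:aux} gives $w(t,\lm_1)=t\cos(k\t)+O(t^2)$. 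All neighbourhoods are shrunk to remain inside $\mD$, which in particular requires $\lm_1\neq E_{11}$; this is why that hypothesis is carried along.
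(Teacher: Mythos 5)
Your proof is correct and follows essentially the same route as the paper: the factorization $\Phi(t,\lm)=t\,\widetilde\Phi(t,\lm)$ with $\widetilde\Phi(t,\lm)=\int_0^1\partial_t\Phi(st,\lm)\,ds$ is exactly the paper's function $\ph$, and your transversality computation $\partial_{\lm_2}\widetilde\Phi(0,\lm^*)=1/k\neq 0$ is the paper's verification that $\partial^2_{t,\lm_2}\Phi(0,\lm^*)=1/k^*>0$. The only cosmetic difference is that you make the eigenvalue $\mu_k(\lm)$ and the identity $\mu_k(\lm^*)=0$ from \eqref{integer eq} explicit, where the paper invokes \eqref{da Phi=0}; the argument is the same.
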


\begin{proof}
First, we prove that
\begin{equation} \label{transversality}
\partial^2_{t,\lm_2} \Phi (0,\lm^*) \neq 0.
\end{equation}
By \eqref{oy},
\[
\partial_t \Phi(0,\lm) = \Pi_V \, d_w F(0,\lm)\,
\big( 1+ d_v \oy(0,\lm) \big) \cos(k \t) = \Pi_V \, d_w F(0,\lm)\,
 \cos(k \t),
\]
and \[
\partial_{t,\lm_2}^2 \Phi(0,\lm^*) =
\Pi_V \, d^2_{w,\lm_2} F(0,\lm^*)\,\cos(k\t).
\]
By \eqref{dF},
\[
d_w F(0,\lm) \cos(k\t) =
\Big( - k^2 E_{22} + \lm_1 + \frac{\lm_2}{k}
 - \Big( g + \frac{(g\rho)^2}{\lm_1 - E_{11}} \Big)\,\frac{1}{k^2}\,
\Big) \cos(k\t),
\]
whence
\[
\Pi_V \, d^2_{w,\lm_2} F(0,\lm^*) \cos(k\t)
= \frac{1}{k^*}\, > 0,
\]
and \eqref{transversality} is proved.
Since
\[
\Phi(t,\lm) = \int_0^t (\partial_t \Phi) (x,\lm)\,dx \,
= \,t \int_0^1 (\partial_t \Phi) (zt,\lm)\,dz,
\]
it follows that $\Phi(t,\lm) =0$, with $t \neq 0$,
if and only if $\ph(t,\lm)=0$, where
\[
\ph(t,\lm) := \int_0^1 (\partial_t \Phi)(xt,\lm)\,dx.
\]
From the smoothness of $\Phi$ it follows that $\ph$ is also smooth.
By \eqref{da Phi=0}, $\ph(0,\lm^*)=0$.
Moreover, $\partial_{\lm_2}\ph(0,\lm^*) \neq 0$ by \eqref{transversality}.
The result now follows from the implicit function theorem.
\end{proof}

\begin{remark*} Since
\[
\Pi_V \, d^2_{w,\lm_1} F(0,\lm^*) \cos(k\t)
= 1 + \Big( \frac{g\rho}{k(\lm_1^* - E_{11})} \Big)^2 > 0,
\]
the role of $\lm_1$ and $\lm_2$ can be swapped.
\qed \end{remark*}

\section{Bifurcation from a double eigenvalue} \label{double}

We have observed that for any $(\lm_1,\lm_2)$ there are at most two positive integer solutions, $k,\,l,$ of \eqref{integer eq}, and this happens only if $f_k(\lm_1) = f_l(\lm_1) = \lm_2$.
Suppose that there are indeed two such solutions, $k$ and $l$,
with
\begin{equation} \label{non-res}
\frac{\max\{k,l\} }{\min\{k,l\} }\, \notin \Z.
\end{equation}
Let $Z_k$ be the closure of $\Span \{ \cos (jk\t): j \in \N\}$ in $L^2(0,2\p)$, and similarly for $Z_l$.
Now note that if one seeks waves with minimal period $2\pi/k$ or $2\pi/l$, the original bifurcation problem \eqref{eq:F=0} may be specialized to a problem on $Z_k$ or $Z_k$ and the reduced problem \eqref{bif eq} is similarly restricted to $Z_k$ or $Z_l$.
In each of these restricted settings separately, only one solution, $k$ or $l$, of
\eqref{integer eq} is relevant, and
 there is a simple eigenvalue from which a curve of solutions in $Z_k\cap H_0^3$ or $Z_l\cap H_0^3$ bifurcates, exactly as in the preceding section.
However, we will now show that other solutions that are neither in $Z_k$ nor $Z_l$ bifurcate at $\lm^*$ when $k$ and $l$ are solutions of \eqref{integer eq} with $\lm= \lm^*$ and \eqref{non-res} holds.

In this case the kernel of the linearized problem is two-dimensional,
\[
V := \Ker d_wF(0,\lm^*) = \big\{ t_1\cos (k \t) + t_2 \cos (l\t) :\, (t_1,t_2) \in \R^2 \big\},
\]
and the bifurcation problem \eqref{bif eq} is
\begin{equation} \label{Phi=0}
\Phi (t_1,t_2,\lm) = 0, \quad \lm = (\lm_1,\lm_2),
\end{equation}
where
\[
\Phi (t_1,t_2,\lm) := \Pi_V F( v + \oy(v,\lm), \lm),
\quad v = t_1 \cos(k\t) + t_2 \cos (l\t).
\]
Let
$\Phi_k \cos (k\t) := \Pi_k \Phi$ and $\Phi_l \cos (l\t):= \Pi_l \Phi
$, where $\Pi_k$ and $\Pi_l$ denote the projections onto $\Span \{ \cos(k\t) \}$ and $\Span \{ \cos(l\t) \}$, respectively. Thus
\eqref{Phi=0} becomes
\begin{align*}
\Phi_k (t_1,t_2,\lm_1,\lm_2) & = 0, \\
\Phi_l (t_1,t_2,\lm_1,\lm_2) & = 0,
\end{align*}
a system of two equations in four unknowns which is satisfied by $(0,0,\lm_1,\lm_2)$ for all $\lm$.
The key to our result is the following observation.

Suppose that $t_1=0$, and $v=t_2 \cos(l\t)$, $t_2 \in \R$.
Then an application of Lemma \ref{lemma:aux} in the subspace $Z_l$ of $2\p/l$-periodic functions yields that $\oy(v,\lm) \in Z_l \cap R$, because of the local uniqueness in the implicit function theorem.
Hence $v+\oy(v,\lm)$ is $2\p/l$-periodic, therefore $F(v+\oy(v,\lm), \lm)$ is also $2\p/l$-periodic.
As a consequence,
\begin{equation} \label{inv observ k}
\Phi_k(0,t_2,\lm) = 0 \ \text{ for all } t_2,\lm.
\end{equation}
For the same reason,
\begin{equation} \label{inv observ l}
\Phi_l(t_1,0,\lm) = 0 \ \text{ for all } t_1,\lm.
\end{equation}
We now require the non-degeneracy condition
\begin{equation} \label{nondeg}
\Big( \frac{g\rho}{\lm_1^* - E_{11}} \Big)^2 \neq kl.
\end{equation}

\begin{remark*}
It is easily checked that condition \eqref{nondeg} is equivalent to the geometrical assumption that $f_k'(\lm_1^*) \neq f_l'(\lm_1^*)$.
In other words, the curves $\mA_k$ and $\mA_l$ are not tangential at their intersection point $\lm_1^*$.
\qed \end{remark*}

\begin{theorem} \label{thm:secondary bif}
Suppose that there exist two integers $k,l$ that satisfy \eqref{integer eq} for $\lm^* = (\lm^*_1, \lm^*_2)$ where $\lm_1^* \neq E_{11}$, and \eqref{non-res} and \eqref{nondeg} hold.
Then there exist neighbourhoods $\mU_2$ of the origin and $U_2$ of $\lm^*$ in $\R^2$,
and functions $\olm(t_1,t_2) = (\olm_1(t_1,t_2), \olm_2(t_1,t_2))$, $\olm \in C^\infty(\mU_2, U_2)$, with $\olm(0,0) = \lm^*$, such that
\[
\Phi(t_1,t_2,\olm(t_1,t_2)) = 0 \quad \text{for all } (t_1,t_2) \in \mU_2,
\]
and, if $\Phi(t_1,t_2,\lm)=0$ with $(t_1,t_2) \in \mU_2\setminus \{(0,0)\}$ and $\lm \in U_2$, then $\lm = \olm(t_1,t_2)$.
As a consequence,
\[
F \big( w(t_1,t_2), \olm(t_1,t_2) \big)=0,
\]
where
\begin{align*}
w(t_1,t_2) := {} & \, t_1 \cos(k\t) + t_2 \cos(l\t) + \oy \big( t_1\cos(k\t) + t_2 \cos(l\t),\, \olm(t_1,t_2) \big) \\
= {} & \, t_1 \cos(k\t) + t_2 \cos(l\t) + O(t_1^2 + t_2^2).
\end{align*}
\end{theorem}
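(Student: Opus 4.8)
The plan is to \emph{desingularize} the system $\Phi_k=\Phi_l=0$ by dividing out the factors that are forced to vanish on the two sheets of ``special'' solutions, and then to solve the resulting system for $\lm=(\lm_1,\lm_2)$ by the implicit function theorem. Since $\oy$ is smooth, $\Phi$ and hence $\Phi_k,\Phi_l$ are smooth near $(0,0,\lm^*)$, and by \eqref{inv observ k} we have $\Phi_k(0,t_2,\lm)=0$ for all $(t_2,\lm)$. Hadamard's lemma then produces a smooth $\Psi_k$ with
\[
\Phi_k(t_1,t_2,\lm)=t_1\,\Psi_k(t_1,t_2,\lm),\qquad \Psi_k(t_1,t_2,\lm)=\int_0^1\partial_{t_1}\Phi_k(xt_1,t_2,\lm)\,dx,
\]
so that $\Psi_k(0,0,\lm)=\partial_{t_1}\Phi_k(0,0,\lm)$; symmetrically \eqref{inv observ l} gives $\Phi_l=t_2\,\Psi_l$ with $\Psi_l(0,0,\lm)=\partial_{t_2}\Phi_l(0,0,\lm)$. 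On the locus where both $t_1\neq0$ and $t_2\neq0$, the system \eqref{Phi=0} is equivalent to $\Psi_k=\Psi_l=0$, which is \eqref{help}; this is the ``general'' part of the solution set.

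Next I would check that $(0,0,\lm^*)$ solves \eqref{help} and that $\partial(\Psi_k,\Psi_l)/\partial(\lm_1,\lm_2)$ is invertible there. Exactly as in the proof of Theorem \ref{thm:simple bif}, using $d_v\oy(0,\lm)=0$ from \eqref{oy} and the fact that $d_wF(0,\lm)$ is diagonal in $\{\cos(j\t)\}$, one finds that $\partial_{t_1}\Phi_k(0,0,\lm)$ equals the $k$-th eigenvalue $-E_{22}k^2+\lm_1+\lm_2/k-(g+(g\rho)^2/(\lm_1-E_{11}))/k^2$ of $d_wF(0,\lm)$ read off from \eqref{dF}. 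Hence $\Psi_k(0,0,\lm^*)=0$ because $k$ satisfies \eqref{integer eq}, and likewise $\Psi_l(0,0,\lm^*)=0$. Differentiating this eigenvalue gives $\partial_{\lm_2}\Psi_k(0,0,\lm^*)=1/k$ and $\partial_{\lm_1}\Psi_k(0,0,\lm^*)=1+(g\rho/(k(\lm_1^*-E_{11})))^2$, and analogously for $l$; here $\lm_1^*\neq E_{11}$ ensures $a:=g\rho/(\lm_1^*-E_{11})$ is finite. The determinant of the $2\times2$ Jacobian is then $(\tfrac1l-\tfrac1k)(1-a^2/(kl))$, which is nonzero because $k\neq l$ forces $\tfrac1l\neq\tfrac1k$ and \eqref{nondeg} gives $a^2\neq kl$: this is precisely where hypotheses \eqref{non-res} and \eqref{nondeg} enter.

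Finally, the implicit function theorem applied to \eqref{help} at $(0,0,\lm^*)$ supplies neighbourhoods $\mU_2\ni 0$, $U_2\ni\lm^*$ and $\olm\in C^\infty(\mU_2,U_2)$ with $\olm(0,0)=\lm^*$ and $\Psi_k=\Psi_l=0$ along $\lm=\olm(t_1,t_2)$, together with local uniqueness. Multiplying back by $t_1$ and $t_2$, and using \eqref{inv observ k}--\eqref{inv observ l} to cover the coordinate axes, gives $\Phi(t_1,t_2,\olm(t_1,t_2))=0$ on all of $\mU_2$; the asserted local uniqueness for $\Phi$ follows from that of the implicit function theorem (again through $\Phi_k=t_1\Psi_k$, $\Phi_l=t_2\Psi_l$ and the vanishing of $\Phi_k,\Phi_l$ on the axes), and substituting into Lemma \ref{lemma:aux} produces $w(t_1,t_2)$ together with its expansion, using \eqref{oy}. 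The main obstacle is this desingularization combined with the eigenvalue bookkeeping: one must make sure the Hadamard factors $\Psi_k,\Psi_l$ are genuinely smooth (so that \eqref{inv observ k}, \eqref{inv observ l} and the smoothness of $\oy$ are invoked correctly), and one must compute the mixed second derivatives $\partial^2_{t_1,\lm_i}\Phi_k$ and $\partial^2_{t_2,\lm_i}\Phi_l$ at the origin accurately enough to recognise that the $2\times2$ determinant is governed exactly by \eqref{non-res} and \eqref{nondeg}.
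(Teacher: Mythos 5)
Your proposal is correct and follows essentially the same route as the paper: the same Hadamard-type desingularization $\Phi_k=t_1\Psi_k$, $\Phi_l=t_2\Psi_l$ based on \eqref{inv observ k}--\eqref{inv observ l}, the same evaluation $\Psi_k(0,0,\lm)=\Pi_k\,d_wF(0,\lm)\cos(k\t)$ via \eqref{dF}, and the same Jacobian determinant $(\tfrac1l-\tfrac1k)\big(1-a^2/(kl)\big)$, nonzero by \eqref{nondeg}, feeding into the implicit function theorem. The only (minor) extra content is your explicit remark on recovering the solutions on the axes and the uniqueness statement, which the paper leaves implicit.
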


\begin{proof}
Let $\Psi := (\Psi_k, \Psi_l)$,
\[
\Psi_k(t_1,t_2,\lm) := \int_0^1 (\partial_{t_1} \Phi_k) (xt_1,t_2,\lm)\,dx,
\quad \
\Psi_l(t_1,t_2,\lm) := \int_0^1 (\partial_{t_2} \Phi_k) (t_1,xt_2,\lm)\,dx.
\]
$\Psi_k$ and $\Psi_l$ are smooth by \eqref{inv observ k} and \eqref{inv observ l}.
Moreover, since
\begin{align} \label{good k}
\Psi_k(0,0,\lm) & = \Pi_k \, d_w F(0,\lm) \cos (k\t) \\
& = - E_{22} k^2 + \lm_1 + \frac{\lm_2}{k}\,
- \Big( g + \frac{(g\rho)^2}{\lm_1 - E_{11}} \Big) \frac{1}{k^2}, \notag
\end{align}
and, analogously,
\begin{equation} \label{good l}
\Psi_l(0,0,\lm) = - E_{22} l^2 + \lm_1 + \frac{\lm_2}{l}\,
- \Big( g + \frac{(g\rho)^2}{\lm_1 - E_{11}} \Big) \frac{1}{l^2},
\end{equation}
it follows that
\[
\Psi(0,0,\lm^*) = 0.
\]
To apply the implicit function theorem to $\Psi$ at the point $(0,0,\lm^*)$, it is sufficient to prove that the 2$\times$2 matrix representing the linear map $\partial_\lm \Psi(0,0,\lm^*)$ is invertible.
Now, differentiating \eqref{good k} and \eqref{good l} with respect to $\lm_1$ and $\lm_2$,
\[
\det \big(\partial_\lm \Psi(0,0,\lm^*) \big) =
\Big\{ \Big( \frac{g\rho}{\lm_1^* - E_{11}} \Big)^2 - kl \Big\}
\Big( \frac1k - \frac1l \Big) \frac1{kl},
\]
which is nonzero by \eqref{nondeg}.
\end{proof}

\begin{remark*}
By the definition of $\Psi$, for $(t_1,t_2) \in \mU_2$, with $t_1\neq0$ and $t_2\neq0$, Theorem \ref{thm:secondary bif} gives solutions of problem \eqref{Phi=0} which do not belong to $Z_k$ nor $Z_l$, as it had been stated above.
\qed \end{remark*}

\textbf{Acknowledgements.}
JFT acknowledges the support of a Royal Society/Wolfson Research Merit Award.
PB is supported by the European Research Council under FP7
and the Italian PRIN \emph{Variational methods and nonlinear differential equations}.
The main part of this paper has been written when PB was supported by the UK EPSRC at the University of Bath.
The authors are grateful to M.C.W. Jones for the reference to the work of Bohr mentioned in the Introduction.
\small{

}
\end{document}